\newcommand{\prs}{\langle\;,\;\rangle}
\newcommand{\too}{\longrightarrow}
\newcommand{\esp}{\quad\mbox{and}\quad}
\newcommand{\G}{{\mathfrak{g}}}
\newcommand{\h}{{\mathfrak{h}}}
\newcommand{\ad}{{\mathrm{ad}}}
\newcommand{\al}{\alpha}
\font\bb=msbm10
\def\R{\hbox{\bb R}}
\def\N{\hbox{\bb N}}
\newtheorem{theo}{Theorem}[section]
\newtheorem{pr}{Proposition}[section]
\newtheorem{Le}{Lemma}[section]
\newtheorem{co}{Corollary}[section]
\newtheorem{exem}{Example}
\newtheorem{rem}{Remark}
\begin{document}

\begin{frontmatter}




\title{On Flat Pseudo-Euclidean Nilpotent Lie Algebras}

 \author[Mohamed Boucetta]{Mohamed Boucetta}
 \address[Mohamed Boucetta]{Universit\'e Cadi-Ayyad,
  Facult\'e des sciences et techniques,
  B.P. 549, Marrakech, Maroc.\\m.boucetta@uca.ac.ma
 }
 \author[Hicham Lebzioui]{Hicham Lebzioui\corref{mycorrespondingauthor}}
 \cortext[mycorrespondingauthor]{Corresponding author}
\address[Hicham Lebzioui]{Universit\'e Moulay Sma\"{\i}l,
 \'{E}cole Sup\'{e}rieur de Technologie Kh\'{e}nifra, B.P : 170, Kh\'{e}nifra, Maroc.\\h.lebzioui@estk.umi.ac.ma
}


\begin{abstract}A flat pseudo-Euclidean Lie algebra is a real Lie algebra with a non degenerate symmetric bilinear form and a  left symmetric product whose the commutator is the Lie bracket and  such that the left multiplications are skew-symmetric. We  show that the center of a  flat pseudo-Euclidean nilpotent Lie algebra  of signature $(2,n-2)$  must be degenerate and  all flat pseudo-Euclidean nilpotent Lie algebras of signature $(2,n-2)$ can be obtained by using the double extension process from flat Lorentzian nilpotent Lie algebras. 
We show also that the center of a flat pseudo-Euclidean 2-step nilpotent Lie algebra is degenerate and all these Lie algebras are obtained by using a sequence of double extension from an abelian Lie algebra. In particular, we determine all flat pseudo-Euclidean 2-step nilpotent Lie algebras of signature $(2,n-2)$. The paper contains also some examples in low dimension.  
\end{abstract}

\begin{keyword}   Nilpotent Lie algebras \sep Nilpotent Lie groups \sep Flat left-invariant metrics \sep double extension.

{\it{\bf 2010 Mathematics Subject Classification:}} 17B60; 17B30; 17B10; 53C50.

\end{keyword}

\end{frontmatter}






\section{Introduction}
A {\it flat pseudo-Euclidean Lie algebra} is a real Lie algebra with a non degenerate symmetric bilinear form and a  left symmetric product whose the commutator is the Lie bracket and   such that the left multiplications are skew-symmetric. In geometrical terms, a flat pseudo-Euclidean Lie algebra is the Lie algebra of a Lie group with a left-invariant pseudo-Riemannian metric with vanishing curvature. Let $(\G,\prs)$ be a flat pseudo-Euclidean Lie algebra of dimension $n$. If the metric $\prs$ is definite positive (resp. of signature $(1,n-1)$), then $(\G,\prs)$ is called  Euclidean  (resp. Lorentzian). Flat pseudo-Euclidean Lie algebras have been studied mostly in the Euclidean and the Lorentzian cases. 
Let us enumerate some important results on flat pseudo-Euclidean Lie algebras: \begin{enumerate}

\item In \cite{Milnor}, Milnor showed that $(\G,\prs)$ is a flat Euclidean Lie algebra if and only if $\G$ splits orthogonally as $\G=\mathfrak{b}\oplus\mathfrak{u}$, where $\mathfrak{u}$ is an abelian ideal, $\mathfrak{b}$ is an abelian subalgebra, and $\ad_b$ is skew-symmetric for any $b\in\mathfrak{b}$. According to this theorem, a nilpotent (non-abelian) Lie algebra can not admit a flat Euclidean metric.

\item In \cite{Aubert}, Aubert and Medina 
showed that all flat Lorentzian nilpotent Lie algebras are obtained by the double extension process from
Euclidean abelian Lie algebras. 
\item Gu\'{e}diri showed in
\cite{guediri} that a flat  Lorentzian 2-step nilpotent Lie algebra is a
trivial extension  of the 3-dimensional Heisenberg Lie algebra
$\mathcal{H}_3$.

 \item In \cite{ABL, AB}, the authors  showed that  flat  Lorentzian  Lie
 algebras with degenerate center or flat nonunimodular Lorentzian  Lie algebras can be obtained by the double extension process from  flat Euclidean Lie
 algebras.\end{enumerate}

The study of flat pseudo-Euclidean Lie algebras of signature other than $(0,n)$ and $(1,n-1)$ is  an open  problem. In this paper, we tackle a part of this problem, namely, we study 
flat pseudo-Euclidean nilpotent Lie algebras of signature $(2,n-2)$ and flat  pseudo-Euclidean 2-step nilpotent Lie algebras of any signature. 
 There are our main results:
\begin{enumerate}
	\item In Theorem \ref{degenerate}, we show that the center of a flat pseudo-Euclidean nilpotent Lie algebra of signature $(2,n-2)$ must be degenerate. From this theorem and Theorem \ref{twosided} we deduce that  all flat pseudo-Euclidean nilpotent Lie algebra of signature $(2,n-2)$ are obtained by the double extension process.
	\item We give some general properties of  flat  pseudo-Euclidean 2-step nilpotent Lie algebras and we show that their center is degenerate.
	we show also that we can construct all this Lie algebras by applying a sequence of double extension starting from a pseudo-Euclidean abelian Lie algebra.
	
	\item We give all 2-step nilpotent Lie algebras which can admit flat pseudo-Euclidean metrics of signature $(2,n-2)$ (Theorem \ref{dim1} and Theorem \ref{dim2}). We will see that a class of 2-step nilpotent Lie algebras which can admit a flat pseudo-Euclidean metrics of signature $(2,n-2)$ is very rich, contrary to the Euclidean and the Lorentzian cases. As example, we show that any 6-dimensional 2-step nilpotent Lie algebra which is not an extension trivial of a 5-dimensional Heisenberg Lie algebra, admits such metric.
	
\end{enumerate}

The paper is organized as follows. In section 2, we give some
generalities on flat pseudo-Euclidean Lie algebras. In section 3 and section 4, we
study flat pseudo-Euclidean metrics of signature $(2,n-2)$ on nilpotent Lie algebras . In
section 5, we study flat pseudo-Euclidean 2-step nilpotent Lie algebra of any signature. In section 6, we give all flat pseudo-Euclidean 2-step nilpotent Lie algebras of signature $(2,n-2)$. We end the paper by giving some examples.

\section{Preliminaries}\label{section2}

In this section, we give some general results on nilpotent Lie algebras and on flat pseudo-Euclidean nilpotent Lie algebras which will be crucial in the proofs of our main results.

Let us start with two useful lemmas. Recall that a pseudo-Euclidean vector space is a real finite dimensional vector space endowed with a non degenerate bilinear symmetric form.
\begin{Le}\label{le1}
	Let $(V,\prs)$ be a pseudo-Euclidean vector space and $A$ a skew-symmetric endomorphism satisfying $A^2=0$ and $\dim \mathrm{Im}A\leq1$. Then $A=0$.
\end{Le}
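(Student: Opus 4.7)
The plan is to argue by contradiction: suppose $A \neq 0$, so that $\dim \mathrm{Im}A = 1$, and derive a contradiction to non-degeneracy of $\langle\,,\,\rangle$.

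First I would fix a generator: write $\mathrm{Im}A = \mathbb{R}e$ with $e \neq 0$, so that $A$ takes the form $A(x) = \varphi(x)\,e$ for a unique linear form $\varphi \in V^*$. The hypothesis $A^2 = 0$ forces $\mathrm{Im}A \subseteq \ker A$, hence $Ae = 0$, i.e.\ $\varphi(e) = 0$. Next I would extract a geometric consequence of skew-symmetry: testing $\langle Ax, e\rangle = -\langle x, Ae\rangle = 0$ for every $x$ shows $\mathrm{Im}A \subseteq e^{\perp}$, and in particular $\langle e, e\rangle = 0$.

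The key step is to exploit skew-symmetry in the bilinear form $\varphi \otimes e$. For all $x,y \in V$,
\[
\varphi(x)\langle e, y\rangle \;=\; \langle Ax, y\rangle \;=\; -\langle x, Ay\rangle \;=\; -\varphi(y)\langle e, x\rangle.
\]
Setting $x = y$ gives $\varphi(x)\langle e, x\rangle = 0$ for every $x \in V$. Since $\varphi \neq 0$ (otherwise $A = 0$), I pick $x_{0} \in V$ with $\varphi(x_{0}) = 1$; the previous identity yields $\langle e, x_{0}\rangle = 0$. Substituting this $x_{0}$ back into the polarized identity with arbitrary $y$ gives
\[
\langle e, y\rangle \;=\; -\varphi(y)\langle e, x_{0}\rangle \;=\; 0 \quad \text{for all } y \in V,
\]
which contradicts the non-degeneracy of $\langle\,,\,\rangle$ (since $e \neq 0$). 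Hence $\varphi = 0$ and $A = 0$.

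I do not anticipate any real obstacle; the only subtle point is remembering to use skew-symmetry \emph{twice}: once against $e$ to force $e$ to be isotropic and lie in $e^\perp$, and once in polarized form to pin down $\langle e,\cdot\rangle$ entirely. The hypothesis $\dim\mathrm{Im}A\leq 1$ is essential because it lets one represent $A$ as a rank-one tensor $\varphi\otimes e$, after which everything reduces to a scalar identity.
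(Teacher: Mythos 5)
Your proof is correct and rests on essentially the same mechanism as the paper's: skew-symmetry forces $\langle Ax,x\rangle=0$ for all $x$, i.e.\ $\varphi(x)\langle e,x\rangle=0$ once $A$ is written as the rank-one map $x\mapsto\varphi(x)e$, and this is incompatible with $A\neq0$ on a non-degenerate space. The only cosmetic difference is how that identity is read: the paper fixes a vector $\bar e$ with $\langle e,\bar e\rangle=1$ (using $\ker A=e^{\perp}$) and concludes $A\bar e=0$, contradicting $A\neq0$, whereas you conclude $\langle e,\cdot\rangle\equiv0$, contradicting non-degeneracy.
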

\begin{proof} Suppose that $A\not=0$. Then $\mathrm{Im}A$ is a totally isotropic vector space of dimension
1. This implies that $\ker A$ is an hyperplan which contains $\mathrm{Im}A$. Let $e$ be a generator of $\mathrm{Im}A$ and choose an isotropic vector $\bar{e}\notin\ker A$ such that $\langle e,\bar{e}\rangle=1$. We have $V=\ker A\oplus\R \bar{e}$ and $A(\bar{e})=\al e$. Then
$\al=\langle A(\bar{e}),\bar{e}\rangle=0$ which gives a contradiction and completes the proof. \end{proof}

\begin{Le}\label{codimension}
	Let $\G$ be a nilpotent Lie algebra,  $\mathfrak{a}$ and $\mathfrak{h}$, respectively, a Lie subalgebra of codimension one and an ideal of codimension two. Then $[\G,\G]$ is contained in $\mathfrak{a}$ and in $\mathfrak{h}$.
\end{Le}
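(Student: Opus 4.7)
The plan is to treat the two assertions separately, since the ideal case is immediate and the subalgebra case reduces to it.

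For the codimension-two ideal $\mathfrak{h}$, I would form the quotient $\G/\mathfrak{h}$, which is a two-dimensional nilpotent Lie algebra. Up to isomorphism, the only two-dimensional Lie algebra that is nilpotent is the abelian one (the non-abelian two-dimensional Lie algebra, with bracket $[e_1,e_2]=e_2$, is solvable but not nilpotent, as $\ad_{e_1}$ is not nilpotent). So $\G/\mathfrak{h}$ is abelian, which is precisely $[\G,\G]\subseteq \mathfrak{h}$.

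For the codimension-one subalgebra $\mathfrak{a}$, my strategy is first to upgrade it to an ideal, after which the same quotient argument in dimension one finishes things, because any one-dimensional Lie algebra is abelian. To upgrade $\mathfrak{a}$ I plan to invoke the standard ``normalizer grows'' property of nilpotent Lie algebras: for any proper subalgebra $\mathfrak{k}\subsetneq\G$, the normalizer $N(\mathfrak{k})=\{x\in\G:[x,\mathfrak{k}]\subseteq\mathfrak{k}\}$ strictly contains $\mathfrak{k}$. The short proof, which I would either cite or include, uses the lower central series $\G=\G_1\supseteq\G_2\supseteq\cdots\supseteq\G_k=\{0\}$: if $j$ is the smallest index with $\G_j\subseteq\mathfrak{k}$ (such $j$ exists and is at least $2$ because $\mathfrak{k}\ne\G$), then $[\G_{j-1},\G]\subseteq\G_j\subseteq\mathfrak{k}$, so $\G_{j-1}\subseteq N(\mathfrak{k})$ while $\G_{j-1}\not\subseteq\mathfrak{k}$. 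Applying this with $\mathfrak{k}=\mathfrak{a}$, the strict inclusion $\mathfrak{a}\subsetneq N(\mathfrak{a})\subseteq\G$ together with $\dim\G-\dim\mathfrak{a}=1$ forces $N(\mathfrak{a})=\G$, i.e., $\mathfrak{a}$ is an ideal. Then $\G/\mathfrak{a}$ is one-dimensional and hence abelian, giving $[\G,\G]\subseteq\mathfrak{a}$.

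There is no real obstacle: the whole argument is structural and avoids any computation, the nontrivial input being only the classical normalizer-growth lemma for nilpotent Lie algebras and the classification of two-dimensional Lie algebras.
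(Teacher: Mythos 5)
Your argument is correct. For the codimension-two ideal you do exactly what the paper does: the quotient $\G/\mathfrak{h}$ is a two-dimensional nilpotent, hence abelian, Lie algebra, so $[\G,\G]\subseteq\mathfrak{h}$. For the codimension-one subalgebra your route differs from the paper's: you first prove that $\mathfrak{a}$ is an ideal by invoking the normalizer-growth property of nilpotent Lie algebras (proved via the lower central series: with $j$ minimal such that $\G_j\subseteq\mathfrak{a}$, one has $[\G_{j-1},\mathfrak{a}]\subseteq\G_j\subseteq\mathfrak{a}$ and $\G_{j-1}\not\subseteq\mathfrak{a}$, so $N(\mathfrak{a})\supsetneq\mathfrak{a}$, hence $N(\mathfrak{a})=\G$ by the codimension-one hypothesis), and then quotient by the one-dimensional $\G/\mathfrak{a}$. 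The paper instead argues directly and more elementarily: writing $\G=\mathfrak{a}\oplus\R y$ and $[x,y]=a(x)y+u_1$ with $u_1\in\mathfrak{a}$, the fact that $\mathfrak{a}$ is a subalgebra gives $\ad_x^n(y)=a(x)^ny+u_n$, and nilpotency of $\ad_x$ forces $a(x)=0$, i.e.\ $[\mathfrak{a},y]\subseteq\mathfrak{a}$, whence $[\G,\G]=[\mathfrak{a},\mathfrak{a}]+[\mathfrak{a},y]\subseteq\mathfrak{a}$. Both proofs rest on the nilpotency of the adjoint operators and reach the same intermediate fact (a codimension-one subalgebra of a nilpotent Lie algebra is an ideal); yours packages this as the classical Engel-type normalizer lemma, which is more structural and generalizes to any proper subalgebra, while the paper's is a self-contained two-line computation that avoids citing or reproving that lemma. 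Either is perfectly acceptable here.
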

\begin{proof}
	We have $\G/\mathfrak{h}$ is a 2-dimensional nilpotent Lie algebra and hence must be abelian. This implies that $[\G,\G]\subset\mathfrak{h}$. On the other hand, write $\G=\mathfrak{a}\oplus\R y$. For any $x\in\mathfrak{a}$, we have
	$$[x,y]=a(x)y+u_1,\;\mbox{where }u_1\in\mathfrak{a}.$$
	Since $\mathfrak{a}$ is a Lie subalgebra then, for any $n\in\N^*$, $\ad^n_x(y)=a(x)^ny+u_n$ with $u_n\in\mathfrak{a}$. Since $\ad_x$ is nilpotent then $a(x)=0$ and the result follows.
\end{proof}
We pursue with some general properties of flat pseudo-Euclidean Lie algebras. A pseudo-Euclidean Lie algebra $(\G,\prs)$ is a finite dimensional real Lie algebra $\G$ endowed with a non degenerate symmetric bilinear form $\prs$. We define a product $(u,v)\mapsto u.v$ on $\G$ called Levi-Civita product by  Koszul's formula
\begin{equation}\label{lc}
2\langle
u.v,w\rangle=\langle[u,v],w\rangle+\langle[w,u],v\rangle+\langle[w,v],u\rangle,
\end{equation}
for any $u,v,w\in\G$. We denote by $\mathrm{L}_u:\G\too\G$ and $\mathrm{R}_u:\G\too\G$,
respectively, the left multiplication and the right multiplication by $u$ given by
$\mathrm{L}_uv=u.v$ and $\mathrm{R}_uv=v.u.$ For any $u\in\G$, $\mathrm{L}_u$ is
skew-symmetric with
respect to $\prs$ and
$\ad_u=\mathrm{L}_u-\mathrm{R}_u,$ where
$\ad_u:\G\too\G$ is given by $\ad_uv=[u,v]$. We call $(\G,\prs)$  {\it flat pseudo-Euclidean Lie algebra} if the Levi-Civita product is left symmetric, i.e., for any $u,v,w\in\G$,
\begin{equation}\label{associative}
\mathrm{ass}(u,v,w)=\mathrm{ass}(v,u,w),
\end{equation}
where
$\mathrm{ass}(u,v,w)=(u.v).w-u.(v.w).$

\begin{rem} Let $G$ be a Lie group, and $\mu$ a left-invariant pseudo-Riemannian metric on $G$. Let $\G=\mbox{Lie}(G)$ and $\prs=\mu_e$. Then the curvature of $(G,\mu)$ vanishes if and only if $(\G,\prs)$ is a flat pseudo-Euclidean Lie algebra.\end{rem}

Let $(\G,\prs)$ be a flat pseudo-Euclidean Lie algebra. The condition \eqref{associative} is also equivalent  to one of the following relations:
\begin{eqnarray}
&&\mathrm{L}_{[u,v]}=[\mathrm{L}_u,\mathrm{L}_v],\label{eqs1}\\
&&\mathrm{R}_{u.v}-\mathrm{R}_v\circ \mathrm{R}_u=[\mathrm{L}_u,\mathrm{R}_v],\label{eqs3}
\end{eqnarray}for any $u,v\in\G$. We denote by $Z(\G)=\{u\in\G,\ad_u=0  \}$ the center of $\G$. For any $u,v\in Z(\G)$ and $a,b\in\G$, one can deduce easily from \eqref{lc}-\eqref{eqs3} that
\begin{equation}\label{center}
u.v=0,\; \mathrm{L}_u=\mathrm{R}_u,\; \mathrm{L}_u\circ \mathrm{L}_v=0\esp u.(a.b)=a.(u.b).
\end{equation}

\begin{pr}\label{pr2}
	Let $(\G,\prs)$ be a flat pseudo-Euclidean nilpotent non abelian Lie algebra. If $Z(\G)=\{u\in\G, \mathrm{L}_u=\mathrm{R}_u=0 \}$ then $Z(\G)$ is degenerate.
\end{pr}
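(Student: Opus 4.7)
The approach I would take is proof by contradiction: assume $Z(\G)$ is non-degenerate and exhibit a nonzero element in $Z(\G)\cap Z(\G)^{\perp}$, which will contradict the non-degeneracy. The hypothesis that $\mathrm{L}_u=\mathrm{R}_u=0$ for every $u\in Z(\G)$ (rather than merely $\ad_u=0$) translates into the strong identity $u.v=v.u=0$ for all $u\in Z(\G)$ and all $v\in\G$, and this identity will be the engine of the argument.

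Assuming $Z(\G)$ is non-degenerate, one has the vector-space orthogonal decomposition $\G=Z(\G)\oplus Z(\G)^{\perp}$. The first key step is to prove that $Z(\G)^{\perp}$ is stable under the Levi-Civita product. For $v,w\in Z(\G)^{\perp}$ and $u\in Z(\G)$, the skew-symmetry of $\mathrm{L}_v$ gives
\begin{equation*}
\langle v.w,u\rangle=-\langle w,v.u\rangle=-\langle w,\mathrm{R}_uv\rangle=0,
\end{equation*}
since $\mathrm{R}_u=0$ by hypothesis. Hence $v.w\in Z(\G)^{\perp}$, and in particular $[Z(\G)^{\perp},Z(\G)^{\perp}]\subset Z(\G)^{\perp}$. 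Because $Z(\G)$ is central in $\G$, one concludes that $\G=Z(\G)\oplus Z(\G)^{\perp}$ is a direct sum of Lie algebras, and the summand $Z(\G)^{\perp}$ is a nilpotent subalgebra of $\G$.

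The contradiction is then immediate. Since $\G$ is non-abelian while $Z(\G)$ is abelian, the factor $Z(\G)^{\perp}$ is nonzero. Being nilpotent and nontrivial, its Lie-algebra center $Z(Z(\G)^{\perp})$ is nonzero. Any element of $Z(Z(\G)^{\perp})$ commutes with all of $Z(\G)^{\perp}$ and, trivially, with every element of $Z(\G)$, hence belongs to $Z(\G)$. This yields
\begin{equation*}
0\neq Z(Z(\G)^{\perp})\subset Z(\G)\cap Z(\G)^{\perp},
\end{equation*}
contradicting the non-degeneracy of $\prs$ on $Z(\G)$.

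The only genuine obstacle I foresee is the stability of $Z(\G)^{\perp}$ under the Levi-Civita product; everything else is essentially bookkeeping. It is precisely here that the full strength of the assumption $\mathrm{L}_u=\mathrm{R}_u=0$ is used, and without it the argument would collapse.
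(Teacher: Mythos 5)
Your proof is correct, and it reaches the conclusion by a route that differs from the paper's in its packaging, though the engine is the same. Your key computation $\langle v.w,u\rangle=-\langle w,\mathrm{R}_u v\rangle=0$ for $u\in Z(\G)$ (which in fact works for every $v\in\G$, not only $v\in Z(\G)^\perp$) is exactly the statement that all Levi-Civita products, hence $[\G,\G]$, are orthogonal to the center; the paper gets the same inclusion $[\G,\G]\subset Z(\G)^\perp$ from the characterization $[\G,\G]^\perp=\{u\in\G,\ \mathrm{R}_u=\mathrm{R}_u^*\}$, since $\mathrm{R}_u=0$ is trivially symmetric. The difference is the endgame: the paper argues directly, invoking the standard fact that a nilpotent non-abelian Lie algebra satisfies $Z(\G)\cap[\G,\G]\neq\{0\}$, so that $\{0\}\neq Z(\G)\cap[\G,\G]\subset Z(\G)\cap Z(\G)^\perp$ and degeneracy is immediate; you instead argue by contradiction, split $\G=Z(\G)\oplus Z(\G)^\perp$, verify that $Z(\G)^\perp$ is stable under the product (hence a nilpotent subalgebra), and then use that a nonzero nilpotent Lie algebra has nontrivial center, which is forced into $Z(\G)\cap Z(\G)^\perp$. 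The two nilpotency inputs are essentially equivalent, and both arguments use only $\mathrm{R}_u=0$ together with skew-symmetry of the left multiplications. What the paper's version buys is brevity and a sharper direct statement (the degenerate directions are exhibited inside $Z(\G)\cap[\G,\G]$), whereas your version buys independence from the $[\G,\G]^\perp$ characterization at the cost of an indirect argument and the extra (easy) check that $Z(\G)^\perp$ is product-stable. No gap in your argument.
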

\begin{proof}  One can see easily that the
	orthogonal of
	the derived ideal of $\G$ is given by
	\begin{equation}
	\label{eq7}[\G,\G]^\perp=\{u\in\G,\mathrm{R}_u=\mathrm{R}_u^*\}.
	\end{equation}
	 Then $Z(\G)\subset[\G,\G]^\perp$ and hence $[\G,\G]\subset Z(\G)^\perp$. Since $\G$ is nilpotent non abelian then
$\{0\}\not= [\G,\G]\cap Z(\G)\subset Z(\G)^\perp\cap Z(\G)$. This shows that $Z(\G)$ is degenerate.
\end{proof}

\begin{pr} \label{pr3} Let $(\G,\prs)$ be a flat pseudo-Euclidean nilpotent Lie algebra. Then:
	\begin{enumerate}
		\item If $(\G,\prs)$ is Euclidean then $\G$ is abelian.
		\item If $(\G,\prs)$ is non abelian Lorentzian then $Z(\G)$ is degenerate.
	\end{enumerate}
\end{pr}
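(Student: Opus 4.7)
The plan is to reduce both assertions to Proposition \ref{pr2} by showing that in a flat pseudo-Euclidean nilpotent Lie algebra of either Euclidean or Lorentzian signature, every central element $u$ satisfies $\mathrm{L}_u=\mathrm{R}_u=0$; once this is established, the hypothesis of Proposition \ref{pr2} is automatic and the two conclusions fall out immediately.

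The engine for this reduction comes from \eqref{center}: for $u,v\in Z(\G)$ we have $\mathrm{L}_u=\mathrm{R}_u$ and $\mathrm{L}_u\circ \mathrm{L}_v=0$. Specializing $v=u$ gives $\mathrm{L}_u^2=0$, and combining this with the skew-symmetry of $\mathrm{L}_u$ one finds, for any $x,y\in\G$,
$$\langle \mathrm{L}_u x,\mathrm{L}_u y\rangle=-\langle x,\mathrm{L}_u^2 y\rangle=0,$$
so that $\mathrm{Im}\,\mathrm{L}_u$ is a totally isotropic subspace of $(\G,\prs)$. Everything then reduces to a signature-dependent bound on the dimension of such a subspace.

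For Part 1, the metric is positive definite, so no non-zero totally isotropic subspace exists; hence $\mathrm{L}_u=0$, which forces $Z(\G)=\{u\in\G:\mathrm{L}_u=\mathrm{R}_u=0\}$. If $\G$ were non-abelian, Proposition \ref{pr2} would produce a degenerate $Z(\G)$, contradicting positive definiteness unless $Z(\G)=\{0\}$; but a non-trivial nilpotent Lie algebra always has non-trivial center, so $\G$ must be abelian. For Part 2, in Lorentzian signature every totally isotropic subspace has dimension at most $1$, so $\dim\mathrm{Im}\,\mathrm{L}_u\leq 1$. Together with $\mathrm{L}_u^2=0$ and the skew-symmetry of $\mathrm{L}_u$, this is exactly the hypothesis of Lemma \ref{le1}, which yields $\mathrm{L}_u=0$. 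Again $Z(\G)=\{u\in\G:\mathrm{L}_u=\mathrm{R}_u=0\}$, and Proposition \ref{pr2} directly delivers the degeneracy of $Z(\G)$.

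The only non-routine move is recognizing that the central annihilation relation $\mathrm{L}_u\circ \mathrm{L}_v=0$, combined with skew-symmetry, confines $\mathrm{Im}\,\mathrm{L}_u$ to a totally isotropic subspace; after that observation, the proof is essentially a signature count plus two citations, and I anticipate no real obstacle.
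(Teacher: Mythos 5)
Your proof is correct and follows essentially the same route as the paper: both parts rest on \eqref{center} giving $\mathrm{L}_u^2=0$ for central $u$, the isotropy of $\mathrm{Im}\,\mathrm{L}_u$ (hence $\mathrm{L}_u=0$ by signature, via Lemma \ref{le1} in the Lorentzian case), and then Proposition \ref{pr2}. The only cosmetic difference is that the paper phrases the Euclidean step as ``a nilpotent skew-symmetric endomorphism must vanish,'' which is the same observation you make via total isotropy of the image.
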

\begin{proof} \begin{enumerate}
	\item According to \eqref{center}, for any $u\in Z(\G)$, $\mathrm{L}_u$ is a nilpotent skew-symmetric endomorphism and hence must vanishes. This gives the result, by virtue of Proposition \ref{pr2}.
	\item This is a consequence of \eqref{center}, Lemma \ref{le1} and Proposition \ref{pr2}.  \qedhere 
\end{enumerate}\end{proof}
Put $N(\G)=\bigcap_{u\in Z(\G)}\ker\mathrm{L}_u$, $\G_0:=N(\G)\cap Z(\G)^\perp$ and $\h_0:=N(\G)^\perp$. These vector spaces and the following lemma which states their main properties will play a central role in this paper, namely, in the proof of Theorem \ref{degenerate}.
\begin{Le}\label{crucial} Let $(\G,\prs)$ be a flat pseudo-Euclidean nilpotent Lie algebra of signature $(2,n-2)$, $n\geq4$. 
	 Then:\begin{enumerate}
	 	\item 
	  $N(\G)$, $\G_0$ and $\h_0$ are left ideals for the Levi-Civita product,   $\h_0\subset\G_0$, and $\h_0$ is  totally isotropic with $\dim\h_0\leq2$. \item If $Z(\G)$ is non degenerate then  the restriction of $\prs$ to $Z(\G)$ is positive definite, $\dim \h_0=2$ and $\dim(Z(\G)\cap[\G,\G])=1$. Moreover,  if $z_0$ is a  generator of $Z(\G)\cap[\G,\G]$ with $\langle z_0,z_0\rangle=1$ then  for any $u,v\in\G$,
	\begin{equation}\label{bra3}
	[u,v]=[u,v]_1-2\langle \mathrm{L}_{z_0}u,v\rangle z_0,
	\end{equation}
	where $[u,v]_1\in Z(\G)^\perp$.
\end{enumerate}

\end{Le}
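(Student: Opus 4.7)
For part (1), my plan is to verify the four claims in order using the identities in \eqref{center}. For $N(\G)$ being a left ideal: for any $z \in Z(\G)$, $v \in N(\G)$, $w \in \G$, the identity $z.(w.v) = w.(z.v) = 0$ gives $\mathrm{L}_z(w.v) = 0$, so $w.v \in N(\G)$. Then $\h_0 = N(\G)^\perp$ is a left ideal because each $\mathrm{L}_w$ is skew. From \eqref{center} I deduce $Z(\G) \subset N(\G)$ (using $\mathrm{L}_u\mathrm{L}_v = 0$), hence $\h_0 \subset Z(\G)^\perp$. The key inclusion $\h_0 \subset N(\G)$ comes from noticing that $\mathrm{Im}(\mathrm{L}_z) \subset N(\G)$ for $z \in Z(\G)$, so by skew-symmetry $\ker \mathrm{L}_z = \mathrm{Im}(\mathrm{L}_z)^\perp \supset \h_0$; intersecting over $z$ yields $\h_0 \subset N(\G)$. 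Therefore $\h_0$ is totally isotropic, $\dim \h_0 \leq 2$, and $\h_0 \subset \G_0$. Finally, $\G_0$ is a left ideal by combining the ideal property of $N(\G)$ with the check $\langle w.v, z\rangle = -\langle v, w.z\rangle = \langle \mathrm{L}_z v, w\rangle = 0$ for $v \in \G_0 \subset N(\G)$.

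For part (2), I assume $\G$ is non-abelian (the conclusions rule out the abelian case). Proposition \ref{pr2} in contrapositive form produces $z \in Z(\G)$ with $\mathrm{L}_z \neq 0$. Since $\mathrm{L}_z$ is skew, $\mathrm{L}_z^2 = 0$, and $\mathrm{Im}(\mathrm{L}_z) \subset \h_0$, Lemma \ref{le1} forbids rank one, so $\mathrm{rank}(\mathrm{L}_z) = 2$, forcing $\dim \h_0 = 2$ and $\mathrm{Im}(\mathrm{L}_z) = \h_0$. Dimension count then gives $\ker \mathrm{L}_z = \h_0^\perp = N(\G)$ for every such $z$. To prove $\prs|_{Z(\G)}$ is positive definite, I would apply a Witt decomposition $\G = \h_0 \oplus V \oplus W$, where $V$ is totally isotropic dual to $\h_0$ and $W = (\h_0 \oplus V)^\perp$ is positive definite of dimension $n-4$. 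Then $N(\G) = \h_0 \oplus W$, and since $Z(\G) \cap \h_0 \subset Z(\G) \cap Z(\G)^\perp = \{0\}$, any $0 \neq z = h + w \in Z(\G)$ satisfies $\langle z, z\rangle = \langle w, w\rangle > 0$.

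For the formula \eqref{bra3} and $\dim(Z(\G) \cap [\G,\G]) = 1$, Koszul's formula applied with $w = z \in Z(\G)$ yields $\langle [u,v], z\rangle = -2\langle \mathrm{L}_z u, v\rangle$. Since every $\mathrm{L}_z$ descends to a skew bilinear form on the two-dimensional quotient $\G/N(\G)$, and $\Lambda^2(\G/N(\G))^*$ is one-dimensional, the linear map $z \mapsto \mathrm{L}_z$ has one-dimensional image, so its kernel $\{z \in Z(\G) : \mathrm{L}_z = 0\}$ has codimension one. Using positive definiteness of $\prs|_{Z(\G)}$, I would pick $z_0$ as a unit generator of the orthogonal complement of this kernel inside $Z(\G)$. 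Then for any $z$ in the kernel, $\langle [u,v], z\rangle = 0$, so the $Z(\G)$-projection of $[u,v]$ lies in $\R z_0$; computing its $z_0$-coefficient gives $\pi_{Z(\G)}([u,v]) = -2\langle \mathrm{L}_{z_0} u, v\rangle z_0$, which is \eqref{bra3}. Nilpotency provides $Z(\G) \cap [\G,\G] \neq \{0\}$ (the last nonzero term of the lower central series is central and in $[\G,\G]$), which by the previous containment equals $\R z_0$.

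The main obstacle I anticipate is the sharp determination $\dim \h_0 = 2$ together with $\ker \mathrm{L}_z = N(\G)$ for every $z \in Z(\G)$ with $\mathrm{L}_z \neq 0$, since the whole rigid picture---the Witt decomposition, positive definiteness of $Z(\G)$, the codimension-one image of $z \mapsto \mathrm{L}_z$, and the explicit bracket formula---all pivot on this precise rank-two statement.
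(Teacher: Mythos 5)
Your proof is correct, and while part (1) and the determination $\dim\h_0=2$ follow the paper's own route (the identities \eqref{center}, Lemma \ref{le1} and Proposition \ref{pr2}), the second half of part (2) is argued differently. For positive definiteness you use a Witt decomposition $\G=\h_0\oplus V\oplus W$ and the inclusion $Z(\G)\subset N(\G)=\h_0\oplus W$, whereas the paper reads off the signature of $Z(\G)^\perp$ from the $2$-dimensional isotropic $\h_0\subset Z(\G)^\perp$; these are the same idea in different packaging. The real divergence is in \eqref{bra3} and $\dim(Z(\G)\cap[\G,\G])=1$: the paper picks $\bar e_1,\bar e_2$ spanning a complement of $\G_0$ in $Z(\G)^\perp$, shows $\G_0.\G\subset Z(\G)^\perp$ so that $[\G,\G]\equiv\R[\bar e_1,\bar e_2]\pmod{Z(\G)^\perp}$, and uses $Z(\G)\cap[\G,\G]\neq\{0\}$ as an input to identify the central direction before writing \eqref{bra3}; you instead observe that each $\omega_z=\langle \mathrm{L}_z\cdot,\cdot\rangle$ descends to a skew form on the $2$-dimensional quotient $\G/N(\G)$, so $z\mapsto \mathrm{L}_z$ has rank one on $Z(\G)$, define $z_0$ as the unit normal to its kernel, derive \eqref{bra3} via Koszul's formula and $\mathrm{L}_{z_0}=\mathrm{R}_{z_0}$, and only then conclude $Z(\G)\cap[\G,\G]=\R z_0$ from nilpotency and the containment $[\G,\G]\subset\R z_0+Z(\G)^\perp$ (with the harmless check that \eqref{bra3} is unchanged under $z_0\mapsto -z_0$). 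Your version buys a cleaner, coordinate-free explanation of why the central part of the bracket is one-dimensional (it is dual to the codimension-two statement $\ker\mathrm{L}_{z}=N(\G)$), while the paper's version produces the explicit vectors $\bar e_1,\bar e_2$ and the auxiliary facts ($\G_0.\G\subset Z(\G)^\perp$, $\ker\mathrm{L}_{z_0}=Z(\G)\oplus\G_0$) that it reuses immediately in the proof of Theorem \ref{degenerate}.
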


\begin{proof}\begin{enumerate}
	\item Note first that, for any $u\in\G$, $(\ker \mathrm{L}_u)^\perp=\mathrm{Im}\mathrm{L}_u$ and hence $\h_0=\sum_{u\in Z(\G)}\mathrm{Im}\mathrm{L}_u$.   From \eqref{center}, we have clearly that $Z(\G)\subset N(\G)$ and,  for any $u,v\in Z(\G)$, $\mathrm{Im}\mathrm{L}_u\subset \ker \mathrm{L}_v$. Thus  $\h_0\subset\G_0$. This implies that $\h_0$ is totally isotropic and since the signature is $(2,n-2)$ one must have $\dim\h_0\leq 2$. One can deduce easily from the third relation in \eqref{center} that $N(\G)$ is a left ideal. This implies, since the left multiplication are skew-symmetric that $\h_0$ and $\G_0$ are also  left ideals.

	\item Suppose now that $Z(\G)$ is non degenerate.  If $\dim\h_0\leq1$ then, according to Lemma \ref{le1},  $\mathrm{L}_u=0$ for any $u\in Z(\G)$  and hence,  by virtue of  Proposition \ref{pr2}, $Z(\G)$ is degenerate. So we must have   $\dim\h_0=2$ and  the restriction of $\prs$ to $Z(\G)^\bot$ is of signature $(2,\dim Z(\G)^\bot-2)$ which implies that the restriction of $\prs$ to $Z(\G)$ is definite positive.
	On the other hand, according to what above we can choose   two  vectors $(\bar{e}_1,\bar{e}_2)$ of $Z(\G)^\perp$ such that
	 $Z(\G)^\perp=\G_0\oplus\mathrm{Span}\{ \bar{e}_1,\bar{e}_2 \}.$ So,
	\[ [\G,\G]=[Z(\G)^\perp,Z(\G)^\perp]=\R[\bar{e}_1,\bar{e}_2]+[\bar{e}_1,\G_0]+[\bar{e}_2,\G_0]+[\G_0,\G_0]. \]We have that $\G_0$ is a left ideal for the Levi-Civita product and for any $a\in\G_0$, $b\in\G$ and $u\in Z(\G)$,
	\[ \langle a.b,u\rangle=-\langle b,a.u\rangle=-\langle b,u.a\rangle=0 \]and hence
	$\G_0.\G\subset Z(\G)^\perp$. This implies that $[\bar{e}_1,\G_0]+[\bar{e}_2,\G_0]+[\G_0,\G_0]\subset Z(\G)^\perp$. Moreover, $[\bar{e}_1,\bar{e}_2]=z+v_0,$ where $z\in Z(\G)$, $z\not=0$ since $Z(\G)\cap[\G,\G]\not=0$ and $v_0\in Z(\G)^\perp$. So $[\G,\G]=\R z\oplus F$ where $F$ is a vector subspace of $Z(\G)^\perp$. From this relation, we can deduce that $Z(\G)\cap[\G,\G]=\R z$ and \eqref{bra3} follows immediately.\qedhere
\end{enumerate}\end{proof}

\section{The center of a flat pseudo-Euclidean nilpotent Lie algebra of signature $(2,n-2)$ is degenerate} \label{section3}

The purpose of this section is to prove the following theorem.
\begin{theo}\label{degenerate} Let $(\G,\prs)$ be a flat pseudo-Euclidean nilpotent non abelian Lie algebra  of signature $(2,n-2)$ with $n\geq4$. Then $Z(\G)$ is degenerate.
\end{theo}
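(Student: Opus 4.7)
The plan is to argue by contradiction. Assume $Z(\G)$ is non-degenerate; then Lemma \ref{crucial}(2) provides $\dim\h_0 = 2$, $N(\G) = \h_0^\perp$, a unit-norm generator $z_0$ of $Z(\G) \cap [\G,\G]$, and the bracket formula
\[
 [u,v] = [u,v]_1 - 2\langle \mathrm{L}_{z_0} u, v\rangle z_0, \qquad [u,v]_1 \in Z(\G)^\perp.
\]
I would then set up a Witt decomposition $\G = \h_0 \oplus \tilde\h_0 \oplus W$, where $\{e_1, e_2\}$ is a basis of $\h_0$, $\tilde\h_0 = \mathrm{Span}\{\bar e_1, \bar e_2\}$ is a totally isotropic complement with $\langle e_i, \bar e_j\rangle = \delta_{ij}$, and $W := (\h_0 \oplus \tilde\h_0)^\perp$ is anisotropic of dimension $n - 4$.

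Next I would pin down the skew-symmetric operator $A := \mathrm{L}_{z_0}$. By \eqref{center} one has $A^2 = 0$ and $\mathrm{Im}\,A \subset \h_0$. If $A$ had rank at most $1$, Lemma \ref{le1} would force $A = 0$, and then \eqref{bra3} would yield $[\G,\G] \subset Z(\G)^\perp$, contradicting $0 \neq z_0 \in Z(\G) \cap [\G,\G]$. Hence $\mathrm{Im}\,A = \h_0$ and $\ker A = \h_0^\perp = N(\G)$; after rescaling the Witt basis I may take $A\bar e_1 = -e_2$ and $A\bar e_2 = e_1$. The bracket formula then reads: the $z_0$-coefficient of $[\bar e_1, v]$ equals $2\langle e_2, v\rangle$ and of $[\bar e_2, v]$ equals $-2\langle e_1, v\rangle$; in particular, $[\bar e_j, N(\G)] \subset Z(\G)^\perp$, while $[\bar e_1, \bar e_2]$ carries a non-trivial $z_0$-component.

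The decisive step is to leverage nilpotency. Writing $[\bar e_1, \bar e_2]_1 = g_0 + \gamma_1 \bar e_1 + \gamma_2 \bar e_2$ with $g_0 \in \G_0 := N(\G) \cap Z(\G)^\perp$, I would study the iterated action of $\ad_{\bar e_1}$ and $\ad_{\bar e_2}$, which are nilpotent by Engel, in conjunction with the identities \eqref{eqs1} and \eqref{eqs3}. Nilpotency applied to carefully chosen quotients is expected first to eliminate the diagonal coefficients $\gamma_1,\gamma_2$, and then to propagate the constraints through $\G_0$ and $W$, on which Proposition \ref{pr3}(1) governs any induced flat Euclidean structure and forces it to be abelian. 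The resulting collapse of structure should force $A = 0$, contradicting $\mathrm{rank}\,A = 2$. The principal obstacle is the bookkeeping in this final step: the subspaces $\h_0$, $\G_0$, and $N(\G)$ are only left ideals for the Levi-Civita product and not \emph{a priori} Lie ideals, so one must carefully identify $\ad$-invariant subspaces or apply Lemma \ref{codimension} to suitable codimension-one subalgebras or codimension-two ideals for the nilpotency argument to cut through cleanly.
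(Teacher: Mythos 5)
Your setup is sound and matches the paper's: arguing by contradiction, invoking Lemma \ref{crucial}(2), and pinning down $A=\mathrm{L}_{z_0}$ (rank $2$, $\mathrm{Im}\,A=\h_0$, $\ker A=N(\G)$, normalized action on a Witt basis) are all correct and are exactly the starting point of the published argument. But the proof stops where the theorem actually begins: everything after ``The decisive step is to leverage nilpotency'' is a programme, not an argument. You never derive the contradiction; you only state that nilpotency ``is expected'' to kill $\gamma_1,\gamma_2$ and that the structure ``should'' collapse to $A=0$. Moreover, the specific route you gesture at is doubtful: neither $W$ nor $\G_0$ is a Lie subalgebra or carries a flat Euclidean Lie algebra structure, so Proposition \ref{pr3}(1) does not apply to them (the paper only uses the much weaker fact that $\mathrm{L}_{e_1}$ induces a skew-symmetric nilpotent, hence zero, endomorphism of the Euclidean quotient $\G_0/\h_0$, and only at the very last step); and, as you yourself note, the pieces of your Witt decomposition are not $\ad$-invariant, so ``iterating $\ad_{\bar e_1},\ad_{\bar e_2}$'' has no clean bookkeeping without further ideas.

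What is missing is precisely the paper's mechanism for producing the contradiction. One first observes that $[\;,\;]_1$ is a nilpotent Lie bracket on $Z(\G)^\perp$ whose Levi-Civita product $\circ$ differs from the ambient one by \eqref{eq50}, and that its center $C(\G)$ is nonzero and meets $\G_0$ trivially; this supplies a distinguished vector $z$ with $e_1:=z_0.z\neq 0$, together with a companion $\bar z$ ($\langle z,\bar z\rangle=0$, $\langle e_1,\bar z\rangle=1$, $e_2:=-z_0.\bar z$). One then combines \eqref{eqs1} and \eqref{eqs3} to show $\mathrm{R}_{z.z_0}=\mathrm{L}_z\circ\mathrm{L}_{z_0}$ is symmetric with image in $\R\,z.z_0$, exploits that $\h_0$ is an abelian left ideal to write $u.e_1=a_1(u)e_1$, $u.e_2=a_2(u)e_1-a_1(u)e_2$, derives from flatness the identity $a_2([u,v])=2\bigl(a_2(v)a_1(u)-a_1(v)a_2(u)\bigr)$, and evaluates it on $u=z$, $v=\bar z$ using $[z,\bar z]=-2z_0$ to conclude $\mathrm{R}_{e_1}=0$; a final nilpotency argument (via the Euclidean quotient $\G_0/\h_0$ and Lemma \ref{le1}) then gives $\mathrm{L}_{e_1}=\ad_{e_1}=0$, so $0\neq e_1\in Z(\G)\cap Z(\G)^\perp$, contradicting the assumed non-degeneracy. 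None of this — the auxiliary center $C(\G)$, the operator identities, the cocycle-type relation for $a_2$ — appears in your proposal, so as it stands the proof has a genuine gap at its central step.
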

\begin{proof} We proceed by contradiction and we suppose that
	 $Z(\G)$ is non degenerate, i.e.,
	$ \G=Z(\G)\oplus Z(\G)^\perp$. As in Lemma \ref{crucial}, we consider
	 $\G_0=\{v\in Z(\G)^\perp/ \mathrm{L}_uv=0,\;\forall u\in Z(\G) \}$ and $\h_0$ its orthogonal in $Z(\G)^\bot$. We have both $\h_0$ and $\G_0$ are left ideals for the Levi-Civita product, $\h_0\subset\G_0$ and $\h_0$ is totally isotropic of dimension 2. Moreover, if
	 $z_0$ is a unit generator of $Z(\G)\cap[\G,\G]$ then, for any $u,v\in\G$,
	\begin{equation}\label{bra30}
	[u,v]=[u,v]_1-2\langle \mathrm{L}_{z_0}u,v\rangle z_0,
	\end{equation}
	where $[u,v]_1\in Z(\G)^\perp$. This relation shows that $\mathrm{L}_{z_0}\not=0$ and since $\mathrm{L}_{z_0}^2=0$ and $\mathrm{Im}\mathrm{L}_{z_0}\subset\h_0$, by virtue of Lemma \ref{le1}, $\mathrm{Im}\mathrm{L}_{z_0}=\h_0$ and $\ker \mathrm{L}_{z_0}=Z(\G)\oplus\G_0$. Moreover, from \eqref{bra30}, one can check easily that 
	 $[\;,\;]_1$ satisfies Jacobi identity and
	$(Z(\G)^\perp,[\;,\;]_1)$ becomes a nilpotent Lie algebra. We denote by $\circ$ the Levi-Civita product of $(Z(\G)^\perp,[\;,\;]_1,\prs)$ and we have obviously, for any $u,v\in Z(\G)^\perp$,
	\begin{equation} u.v=u\circ v-\langle \mathrm{L}_{z_0}u,v\rangle z_0. \label{eq50}\end{equation}
	Let $C(\G)$ denote the center of $(Z(\G)^\perp,[\;,\;]_1)$. We have $C(\G)\not=0$ and $C(\G)\cap\G_0=\{0\}$. Indeed, if $u\in C(\G)\cap\G_0$, then for any $v\in Z(\G)^\perp$,
	\[ [u,v]=[u,v]_1-2\langle \mathrm{L}_{z_0}u,v\rangle z_0=0, \]  hence $u\in Z(\G)$ and then  $u=0$. This implies that $1\leq\dim C(\G)\leq 2$ and for any $u\in C(\G)\setminus\{0\}$, $z_0.u\not=0$.
	
	Let $z$ be a non-null vector in $C(\G)$ then $z_0.z$ is a non-null vector in
	$\h_0$. From \eqref{eqs1} we get $\mathrm{L}_{z}\circ \mathrm{L}_{z_0}=\mathrm{L}_{z_0}\circ \mathrm{L}_{z}$ and by using \eqref{eqs3}  we have
	\[  \mathrm{R}_{z.z_0}=\mathrm{R}_{z_0}\circ \mathrm{R}_{z}=
	\mathrm{L}_{z_0}\circ \mathrm{R}_{z}.\]
	For any $u\in Z(\G)^\bot$, we have from \eqref{eq50} and the fact that $z\in C(\G)$,
	\[ \mathrm{L}_zu=z\circ u-\langle {z_0}.z,u\rangle z_0\esp \mathrm{R}_zu=u\circ z+\langle {z_0}.z,u\rangle z_0=z\circ u+\langle {z_0}.z,u\rangle z_0 . \]Thus
	$\mathrm{L}_zu=\mathrm{R}_zu-2\langle {z_0}.z,u\rangle z_0$. This relation is also true for $u\in Z(\G)$ since $z_0.u=0$ and hence $\mathrm{L}_z=\mathrm{R}_z+A_z$,
	where $\mathrm{A}_z=-2\langle z.z_0,.\rangle z_0$. Since $\mathrm{L}_{z_0}\circ A_z=0$, we deduce that\begin{equation}\label{new1}
	 \mathrm{R}_{z.z_0}=\mathrm{L}_{z_0}\circ \mathrm{R}_{z}=\mathrm{L}_{z_0}\circ(\mathrm{L}_z-A_z)=  \mathrm{L}_{z_0}\circ \mathrm{L}_{z}=\mathrm{L}_{z}\circ \mathrm{L}_{z_0}.\end{equation} This relation implies that $\mathrm{R}_{z.z_0}$ is symmetric and $\G_0\oplus Z(\G)\subset\ker\mathrm{R}_{z.z_0}$. From \eqref{eq50}, we have $z.z=0$, and hence $\G_0\oplus\R z\oplus Z(\G)\subset\ker \mathrm{R}_{z.z_0}$. From the symmetry of $\mathrm{R}_{z.z_0}$ we deduce that  $\mathrm{Im} \mathrm{R}_{z.z_0}=(\ker \mathrm{R}_{z.z_0})^\bot$ and finally $\mathrm{Im} \mathrm{R}_{z.z_0}\subset (\G_0\oplus\R z\oplus Z(\G))^\bot=\R z.z_0$. So we can write, for any $u\in\G$, 
	 \begin{equation}\label{new}
	 \mathrm{R}_{z.z_0}(u)=a_1(u)z.z_0=\al\langle z.z_0,u\rangle z.z_0,
	 \end{equation}where $a_1\in\G^*$ and $\al\in\R$. We will show now that $\mathrm{R}_{z.z_0}=0$.
	
	Put $e_1=z_0.z$. Since the orthogonal of $z$ in $Z(\G)^\perp$ is different from the orthogonal of $e_1$ in $Z(\G)^\perp$, we can choose  $\bar{z}\in Z(\G)^\perp$ such that $\langle z,\bar{z}\rangle=0$ and $\langle e_1,\bar{z}\rangle=1$. We put
	 $e_2=-z_0.\bar{z}$. We have $\langle e_2,z\rangle=1$, $Z(\G)^\perp=\G_0\oplus\mathrm{span}\{z,\bar{z}\}$ and
	$(e_1,e_2)$ is a basis of $\h_0$.  Now $\h_0$ is a 2-dimensional subalgebra of a nilpotent Lie algebra then it must be abelian and since $\h_0\subset\ker\mathrm{R}_{e_1}$ we deduce that $e_1.e_1=e_1.e_2=e_2.e_1=0$. Moreover,  $\h_0$ is a left ideal and we can write, for any $u\in\G$,
	\[ u.e_1=a_1(u)e_1\esp u.e_2=a_2(u)e_1+b_2(u)e_2. \]From the relation $u.(z_0.z)=z_0.(u.z)$ shown in  \eqref{center}, we deduce that $a_1(u)z_0.z=z_0.(u.z)$,  $a_1(u)z-u.z\in \ker\mathrm{L}_{z_0}=\h_0^\perp$ and hence
	\[ 0=a_1(u)\langle z,e_2\rangle-\langle u.z,e_2\rangle=a_1(u)\langle z,e_2\rangle+\langle z,u.e_2\rangle=a_1(u)+b_2(u). \]
	 Thus $b_2=-a_1$.
		Using the fact that the curvature vanishes, we get
		\begin{eqnarray*}
			\;[u,v].{e_2}&=&u.(v.{e_2})-v.(u.{e_2})\\&=&u.(a_2(v){e_1}-a_1(v){e_2})-v.(a_2(u){e_1}-a_1(u){e_2})\\
			&=&2(a_2(v)a_1(u)-a_1(v)a_2(u))e_1.
		\end{eqnarray*}Thus
		\begin{equation*}\label{a2}
		a_2([u,v])=2(a_2(v)a_1(u)-a_1(v)a_2(u)).
		\end{equation*}
	By taking $u=z$ and $v=\bar{z}$ in this relation and since $a_2(z_0)=0$,  $a_1(z)=0$ and, by virtue  of \eqref{bra30}, $[z,\bar{z}]=-2z_0$, we get
	$a_2(z)a_1(\bar{z})=0$. Now
	\[ a_1(\bar{z})e_1=\mathrm{R}_{e_1}(\bar{z})\stackrel{\eqref{new1}}{=}\mathrm{L}_{z}\circ \mathrm{L}_{z_0}(\bar{z})=-z.e_2=-a_2(z)e_1. \]
	This relation and $a_2(z)a_1(\bar{z})=0$ imply that $\mathrm{R}_{e_1}(\bar{z})=0$. But $\G_0\oplus\R z\oplus Z(\G)\subset\ker \mathrm{R}_{e_1}$ so finally 
	$\mathrm{R}_{e_1}=0$.
	 To complete, we will show that $e_1\in Z(\G)$, i.e, $\mathrm{L}_{e_1}=\mathrm{ad}_{e_1}=0$ and we will get a contradiction. \\
	Note first that $\mathrm{L}_{e_1}$ is nilpotent,  $\mathrm{L}_{e_1}(\h_0)=0$ and $\mathrm{L}_{e_1}(\G_0)\subset\G_0$. So $\mathrm{L}_{e_1}$ induces on the Euclidean vector space $\G_0/\h_0$ a skew-symmetric nilpotent endomorphism which must then vanish. So $\mathrm{L}_{e_1}(\G_0)\subset\h_0$. On the other hand, by virtue of \eqref{bra30}, 
	$e_1.z=[e_1,z]=0$. So for any $x\in\G_0$, $e_1.x=[e_1,x]=a(x)e_1+b(x)e_2$. This implies that $b(x)=\langle e_1.x,z\rangle=-\langle x,e_1.z\rangle=0$. But
	 $\ad_x$ is nilpotent so $a(x)=0$ and we deduce that $\mathrm{L}_{e_1}(\G_0)=0$. So far, we have shown that $\G_0\oplus\R z\oplus Z(\G)\subset\ker\mathrm{L}_{e_1}$ and hence its image has a dimension less or equal to 1. Moreover, $\mathrm{Im}\mathrm{L}_{e_1}\subset \h_0$ and hence $\mathrm{L}_{e_1}^2=0$ and we can conclude by using Lemma \ref{le1}.
\end{proof}

\section{Flat pseudo-Euclidean nilpotent Lie algebras of signature $(2,n-2)$ are obtained by the double extension process}\label{section4}

In this section, based on Theorem \ref{degenerate}, we will show that any flat pseudo-Euclidean nilpotent Lie algebra of signature $(2,n-2)$ can be obtained by the double extension process from a Lorentzian or an Euclidean flat nilpotent Lie algebra. To do so we need first to recall the double extension process introduced by Aubert and Medina \cite{Aubert}. Note that Propositions 3.1 and 3.2 in the paper \cite{Aubert} are essential in this process. 

Let $(B,[\;,\;]_0,\prs_0)$ be a pseudo-Riemannian flat Lie algebra,
$\xi,D:B\too B$ two
endomorphisms of $B$, $b_0\in B$ and $\mu\in\R$ such that:
\begin{enumerate}
	\item \label{enu1}$\xi$ is a 1-cocycle of  $(B,[\;,\;]_0)$ with respect to the
	representation $\mathrm{L}:B\too\mathrm{End}(B)$ defined by the left multiplication
	associated to the Levi-Civita product, i.e., for any $a,b\in B$,
	\begin{equation}\label{eq3}
	\xi([a,b])=\mathrm{L}_a\xi(b)-\mathrm{L}_b\xi(a),
	\end{equation}
	
	\item $D-\xi$ is skew-symmetric with respect to $\prs_0$,
	\begin{equation} \label{eq5}
	[D,\xi]=\xi^2-\mu\xi-\mathrm{R}_{b_0},
	\end{equation}and for any $a,b\in B$
	\begin{equation} \label{eq6}
	a.\xi(b)-\xi(a.b)=D(a).b+a.D(b)-D(a.b).
	\end{equation}
	
\end{enumerate}
We call  $(\xi,D,\mu,b_0)$ satisfying the two conditions above
\emph{admissible}.\\ Given $(\xi,D,\mu,b_0)$ admissible,
we endow the vector space $\G=\R e\oplus B\oplus\R \bar{e}$  with the inner product
$\prs$ which extends $\prs_0$, for which $\mathrm{span}\{e,\bar{e}\}$ and $B$ are
orthogonal,
$\langle e,e\rangle=\langle \bar{e},\bar{e}\rangle=0$ and $\langle e,\bar{e}\rangle=1$.
We define
also on
$\G$ the bracket
\begin{equation}\label{bracket}[\bar{e},e]=\mu e,\; [\bar{e},a]=D(a)-\langle
b_0,a\rangle_0e\esp[a,b]=[a,b]_0+\langle(\xi-\xi^*)(a),b\rangle_0e,\end{equation}where
$a,b\in B$
and
$\xi^*$ is the adjoint of $\xi$ with respect to $\prs_0$. Then $(\G,[\;,\;],\prs)$ is a
flat pseudo-Euclidean  Lie algebra  called \emph{double extension}
of $(B,[\;,\;]_0,\prs_0)$ according to $(\xi,D,\mu,b_0)$. Using this method, Aubert and Medina characterize a flat Lorentzian
nilpotent Lie algebras. They show that $(\G,\prs)$ is a flat Lorentzian nilpotent Lie
algebra if and only if $(\G,\prs)$ is a double extension
of an Euclidean abelian Lie algebra according to $\mu=0$, $D=\xi$ and
$b_0$ where $D^2=0$.

\begin{theo}\label{twosided}
	Let $(\G,\prs)$ be a flat pseudo-Euclidean nilpotent Lie algebra of signature $(2,n-2)$. Then, for any $e\in Z(\G)\cap Z(\G)^\bot$, $\mathrm{L}_e=\mathrm{R}_e=0$. Moreover, $Z(\G)+Z(\G)^\bot$ is a two-sided ideal with respect to the Levi-Civita product.
\end{theo}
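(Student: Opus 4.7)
The two assertions are linked: once $\mathrm{L}_e=0$ for $e\in I:=Z(\G)\cap Z(\G)^\bot$, the ideal property of $Z(\G)+Z(\G)^\bot=I^\bot$ will follow quickly from skew-symmetry of $\mathrm{L}$. I focus on the first statement. The key preliminary observation, valid for every $e\in Z(\G)$ (no signature hypothesis), is that $\mathrm{Im}\,\mathrm{L}_e\subset Z(\G)$: $[e,a]=0$ with \eqref{eqs1} gives $\mathrm{L}_a\mathrm{L}_e=\mathrm{L}_e\mathrm{L}_a$, and \eqref{eqs3} with $u=e$ combined with $\mathrm{R}_e=\mathrm{L}_e$ gives $\mathrm{R}_{\mathrm{L}_eb}=\mathrm{L}_e\mathrm{R}_b$; since $\mathrm{L}_ab=\mathrm{R}_ba=a.b$, one computes $\mathrm{L}_a\mathrm{L}_eb=\mathrm{L}_e(a.b)=\mathrm{R}_{\mathrm{L}_eb}a$, so $[a,\mathrm{L}_eb]=0$ and $\mathrm{L}_eb\in Z(\G)$.

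Fix $e\in I$. Since \eqref{center} gives $\mathrm{L}_ez=z.e=0$ for all $z\in Z(\G)$, skew-symmetry yields $\mathrm{Im}\,\mathrm{L}_e\subset Z(\G)^\bot$; combined with the preliminary fact, $\mathrm{Im}\,\mathrm{L}_e\subset I$. Because $I$ is totally isotropic and the signature is $(2,n-2)$, $\dim I\leq 2$. If $\dim I\leq 1$, then $\dim\mathrm{Im}\,\mathrm{L}_e\leq 1$ and Lemma \ref{le1} (applied to the skew operator $\mathrm{L}_e$, which squares to zero by \eqref{center}) forces $\mathrm{L}_e=0$.

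The main obstacle is the case $\dim I=2$. Pick a basis $(e_1,e_2)$ of $I$ and isotropic dual vectors $\bar e_1,\bar e_2$ with $\langle e_i,\bar e_j\rangle=\de_{ij}$. From $\ker\mathrm{L}_{e_i}\supset I^\bot$ (coming from $\mathrm{Im}\,\mathrm{L}_{e_i}\subset I$ and skew-symmetry) together with $\mathrm{L}_{e_i}(I)=0$, one gets $\mathrm{L}_{e_i}\bar e_1=b_ie_2$ and $\mathrm{L}_{e_i}\bar e_2=-b_ie_1$ for some $b_i\in\R$. For $e\in Z(\G)$, Koszul's formula \eqref{lc} simplifies (since $[e,\cdot]=0$) to $\langle[u,v],e\rangle=2\langle u.v,e\rangle=-2\langle\mathrm{L}_eu,v\rangle$. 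Applied to $(u,v)=(\bar e_1,\bar e_2)$ this yields $[\bar e_1,\bar e_2]\equiv-2b_1\bar e_1-2b_2\bar e_2\pmod{I^\bot}$, and applied with $v\in I^\bot$ it shows $\ad_{\bar e_j}(I^\bot)\subset I^\bot$, since $\mathrm{L}_{e_i}\bar e_j\in I\perp I^\bot$. The induced endomorphisms of $\G/I^\bot$ in the basis $(\bar e_1,\bar e_2)$ are
\[
\overline{\ad_{\bar e_1}}=\begin{pmatrix}0&-2b_1\\0&-2b_2\end{pmatrix},\qquad\overline{\ad_{\bar e_2}}=\begin{pmatrix}2b_1&0\\2b_2&0\end{pmatrix}.
\]
Nilpotency of $\ad_{\bar e_1},\ad_{\bar e_2}$ forces their traces to vanish, so $b_1=b_2=0$ and $\mathrm{L}_e=0$ for every $e\in I$. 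The crux is that the signature bound $\dim I\leq 2$ reduces the problem to this 2-dimensional quotient where nilpotency of $\ad$ is decisive.

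Finally, $(A\cap B)^\bot=A^\bot+B^\bot$ gives $Z(\G)+Z(\G)^\bot=I^\bot$. For $v\in\G$, $x\in I^\bot$, $e\in I$ one has $\mathrm{L}_ve=v.e=e.v=\mathrm{L}_ev=0$, so $\langle v.x,e\rangle=-\langle x,\mathrm{L}_ve\rangle=0$, giving $v.x\in I^\bot$; and $\langle x.v,e\rangle=\langle[x,v],e\rangle+\langle v.x,e\rangle=-2\langle\mathrm{L}_ex,v\rangle=0$, giving $x.v\in I^\bot$. Hence $I^\bot$ is a two-sided ideal for the Levi-Civita product.
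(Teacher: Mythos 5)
Your argument hinges on the ``preliminary observation'' that $\mathrm{Im}\,\mathrm{L}_e\subset Z(\G)$ for every $e\in Z(\G)$, and this is where the proof breaks down. First, the derivation is circular: by definition $\mathrm{R}_{\mathrm{L}_eb}a=a.(e.b)=\mathrm{L}_a\mathrm{L}_eb$, so your chain $\mathrm{L}_a\mathrm{L}_eb=\mathrm{L}_e(a.b)=\mathrm{R}_{\mathrm{L}_eb}a$ only re-proves an identity that holds by definition; to conclude $[a,\mathrm{L}_eb]=0$ you would need $\mathrm{L}_{\mathrm{L}_eb}a=\mathrm{R}_{\mathrm{L}_eb}a$ for all $a$, i.e.\ precisely that $\mathrm{L}_eb$ is central, which is the claim itself. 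Second, the claim is actually false: in the paper's own flat metric $(\mathrm{L}_6^4,\prs_0)$ of Section 6, $x_5$ is central but $x_5.x_1=-\frac{1}{6bd}\,x_3$, and $[x_1,x_3]=x_6\neq0$, so $\mathrm{Im}\,\mathrm{L}_{x_5}\not\subset Z(\G)$. For a central $e$ all one can say in general is $\mathrm{L}_e^2=0$ and $\mathrm{Im}\,\mathrm{L}_e\subset Z(\G)^\perp$ (skew-symmetry plus $\mathrm{L}_e(Z(\G))=0$), which is far weaker than $\mathrm{Im}\,\mathrm{L}_e\subset I$.

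Both branches of your case analysis collapse without that inclusion. In the case $\dim I\leq1$ you invoke Lemma \ref{le1}, but the bound $\dim\mathrm{Im}\,\mathrm{L}_e\leq1$ came from $\mathrm{Im}\,\mathrm{L}_e\subset I$; knowing only that $\mathrm{Im}\,\mathrm{L}_e$ is totally isotropic gives $\dim\mathrm{Im}\,\mathrm{L}_e\leq2$ in signature $(2,n-2)$, which Lemma \ref{le1} cannot handle. In the case $\dim I=2$, the identities $\mathrm{L}_{e_i}\bar e_1=b_ie_2$, $\mathrm{L}_{e_i}\bar e_2=-b_ie_1$, the stability $\ad_{\bar e_j}(I^\perp)\subset I^\perp$, and hence the $2\times2$ trace argument on $\G/I^\perp$, all rest on $\ker\mathrm{L}_{e_i}\supset I^\perp$, i.e.\ again on $\mathrm{Im}\,\mathrm{L}_{e_i}\subset I$. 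So the first assertion of the theorem is not established (your closing paragraph proving that $I^\perp=Z(\G)+Z(\G)^\perp$ is a two-sided ideal is fine once $\mathrm{L}_e=\mathrm{R}_e=0$ is known). For comparison, the paper avoids this issue by working not with $I$ but with $N(\G)=\bigcap_{u\in Z(\G)}\ker\mathrm{L}_u$ and $\h_0=N(\G)^\perp$ (Lemma \ref{crucial}), splitting on $\dim\h_0$; in each case it shows that $Z(\G)+Z(\G)^\perp$ is a subalgebra or ideal of codimension at most two, applies Lemma \ref{codimension} to get $[\G,\G]\subset Z(\G)+Z(\G)^\perp$, and then uses the characterization $[\G,\G]^\perp=\{u:\mathrm{R}_u=\mathrm{R}_u^*\}$ to conclude that $\mathrm{L}_e=\mathrm{R}_e$ is both symmetric and skew-symmetric, hence zero. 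Some nilpotency-based input of that kind (or a correct substitute for your preliminary claim restricted to $e\in I$) is needed to repair your proof.
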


\begin{proof} Recall that $[\G,\G]^\perp=\{u\in\G,\mathrm{R}_u=\mathrm{R}_u^*\}$,
	put $\mathfrak{a}=Z(\G)+Z(\G)^\bot$ and consider $N(\G)=\{v\in\G/\ \mathrm{L}_uv=0,\ \forall u\in Z(\G)\}$ and $\mathfrak{h}_0$ its orthogonal. We have seen in Lemma \ref{crucial} that both $N(\G)$ and $\mathfrak{h}_0$ are left ideals and $\mathfrak{h}_0$ is totally isotropic. We have seen that if $\dim\mathfrak{h}_0\leq1$ then $N(\G)=\G$ and hence any vector $e\in Z(\G)\cap Z(\G)^\bot$ satisfies the conditions required. Suppose that $\dim \mathfrak{h}_0=2$. We claim that $Z(\G)\cap Z(\G)^\bot\subset\mathfrak{h}_0$. This is a consequence of the fact that $Z(\G)\cap Z(\G)^\bot\subset Z(\G)\subset N(\G)$ and the fact that $N(\G)/\mathfrak{h}_0$ is Euclidean. We distinguish two cases:
	\begin{enumerate}
		\item $Z(\G)\cap Z(\G)^\bot=\mathfrak{h}_0$ and hence $\mathfrak{a}=N(\G)$. We have that $\G.N(\G)\subset N(\G)$ and for any $u\in N(\G)$, $w\in\G$ and $v\in\mathfrak{h}_0$, $v.u=u.v=0$ and hence $\langle u.w,v\rangle=0$. This implies that $N(\G)$ is an ideal for the Lie bracket and, according to Lemma \ref{codimension}, $[\G,\G]\subset N(\G)$. We deduce that $Z(\G)\cap Z(\G)^\bot\subset[\G,\G]^\bot$ and hence for any $e\in Z(\G)\cap Z(\G)^\bot$, $\mathrm{L}_e$ is both skew-symmetric and symmetric and hence $\mathrm{L}_e=\mathrm{R}_e=0$.
		\item $\dim Z(\G)\cap Z(\G)^\bot=1$. Since $Z(\G)\cap Z(\G)^\bot\subset\mathfrak{h}_0$, we have $N(\G)\subset\mathfrak{a}$ and $\mathfrak{a}=\G_0+\R y$. We have $\G.\G_0\subset\mathfrak{a}$ and for any $u\in\G_0$, $w\in\G$ and $v\in Z(\G)\cap Z(\G)^\bot$, $v.u=u.v=0$ and hence $\langle u.w,v\rangle=0$. Thus $N(\G).\G\subset\mathfrak{a}$. Moreover, for any $v\in Z(\G)\cap Z(\G)^\bot$, $\langle y.y,v\rangle=0$ and then $y.y\in\mathfrak{a}$. In particular, $\mathfrak{a}.\mathfrak{a}\subset\mathfrak{a}$ and hence $\mathfrak{a}$ is a subalgebra. According to Lemma \ref{codimension}, $[\G,\G]\subset\mathfrak{a}$ and hence $Z(\G)\cap Z(\G)^\bot\subset [\G,\G]^\bot$. This implies that for any $e\in Z(\G)\cap Z(\G)^\bot$, $\mathrm{L}_e=\mathrm{R}_e=0$ and $\mathfrak{a}$ is a two-sided ideal.\qedhere
	\end{enumerate}
\end{proof}
\begin{theo}\label{double}
	Let $(\G,\prs)$ be a flat pseudo-Euclidean nilpotent Lie algebra of signature $(2,n-2)$. Then $(\G,\prs)$ is a double extension of a flat Lorentzian nilpotent Lie algebra, according to $\mu=0$, $D$, $\xi$ and $b_0$ where $D$ is a nilpotent endomorphism.
\end{theo}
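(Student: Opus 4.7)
The plan is to realize $(\G,\prs)$ explicitly as a double extension, using a vector $e$ furnished by Theorems \ref{degenerate} and \ref{twosided}. By Theorem \ref{degenerate}, $Z(\G)$ is degenerate, hence $Z(\G)\cap Z(\G)^\perp\neq\{0\}$; pick any nonzero $e$ there. Theorem \ref{twosided} gives $\mathrm{L}_e=\mathrm{R}_e=0$, and then \eqref{eq7} yields $e\in[\G,\G]^\perp$. Choose an isotropic $\bar e$ with $\langle e,\bar e\rangle=1$, and set $B=\{e,\bar e\}^\perp$, so that $\G=\R e\oplus B\oplus\R\bar e$ orthogonally and $(B,\prs_{|B})$ has signature $(1,n-3)$, i.e.\ is Lorentzian.

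Next, I would read off the double-extension data. Since $e\in[\G,\G]^\perp$, for $a,b\in B$ the bracket $[a,b]$ has no $\bar e$-component, so write $[a,b]=[a,b]_0+\lambda(a,b)e$ with $[a,b]_0\in B$; similarly $\langle[\bar e,a],e\rangle=0$ allows us to write $[\bar e,a]=D(a)-\langle b_0,a\rangle_0 e$ with $D(a)\in B$ and $b_0\in B$ uniquely determined. Since $e$ is central, $[\bar e,e]=0$ forces $\mu=0$. Using $\mathrm{L}_e=\mathrm{R}_e=0$, one checks $\langle a.b,e\rangle=-\langle b,a.e\rangle=0$, so $a.b\in B\oplus\R e$ for $a,b\in B$; writing $a.b=a\circ b+\nu(a,b)e$, Koszul's formula shows that $\circ$ is the Levi-Civita product of $(B,[\;,\;]_0,\prs_{|B})$. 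In particular $[\;,\;]_0$ satisfies Jacobi (projecting Jacobi in $\G$, using $e$ central), $B$ is nilpotent (it is isomorphic to the quotient of the subalgebra $B\oplus\R e$ of $\G$ by $\R e$), and $\circ$ is left-symmetric (projecting the left-symmetry of $\cdot$ term by term using $\mathrm{L}_e=0$), so $(B,[\;,\;]_0,\prs_{|B})$ is a flat Lorentzian nilpotent Lie algebra.

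Finally, I would define $\xi:B\too B$ by requiring $\lambda(a,b)=\langle(\xi-\xi^*)(a),b\rangle_0$; the existence follows by taking $\xi(a)$ to be the $B$-projection of $a.\bar e$, since Koszul applied to $\langle a.b,\bar e\rangle$ yields exactly this formula. It then remains to verify the four admissibility conditions: the cocycle identity \eqref{eq3} is \eqref{eqs1} applied to $\bar e$ and projected onto $B$; the skew-symmetry of $D-\xi$ follows from comparing Koszul for $\langle\bar e.a,b\rangle$ and $\langle a.\bar e,b\rangle$; the compatibility \eqref{eq6} is \eqref{eqs1} for the triple $(\bar e,a,b)$ projected onto $B$; and \eqref{eq5} (with $\mu=0$) is \eqref{eqs3} for $u=\bar e$, $v=a$, again projected onto $B$, where $\mathrm{R}_{b_0}$ appears through the $e$-components of the Levi-Civita products. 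Nilpotency of $D$ follows from that of $\mathrm{ad}_{\bar e}$: since $\mathrm{ad}_{\bar e}(a)\equiv D(a)\pmod{\R e}$ and $[\bar e,e]=0$, induction gives $\mathrm{ad}^k_{\bar e}(a)\equiv D^k(a)\pmod{\R e}$, so $\mathrm{ad}_{\bar e}^k=0$ forces $D^k=0$.

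The main obstacle I expect is the bookkeeping required to pin down $\xi$ correctly and verify \eqref{eq3}--\eqref{eq6}: several natural candidates exist (via $a.\bar e$, $\bar e.a$, or symmetrizations thereof), and only the one matching the bracket formula \eqref{bracket} makes all four conditions hold simultaneously. Once $\xi$ is fixed by that constraint, each admissibility identity becomes the $B$-projection of the corresponding identity (\eqref{eqs1} or \eqref{eqs3}) on $\G$, after routine use of Koszul's formula and $\mathrm{L}_e=\mathrm{R}_e=0$.
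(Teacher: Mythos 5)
Your proposal is correct and follows the paper's route: both arguments use Theorems \ref{degenerate} and \ref{twosided} to produce a nonzero isotropic central vector $e$ with $\mathrm{L}_e=\mathrm{R}_e=0$, so that $\R e$ and its orthogonal are two-sided ideals for the Levi-Civita product, and then conclude that $\G$ is a double extension of a flat Lorentzian nilpotent Lie algebra with $\mu=0$ and $D$ nilpotent (the nilpotency argument via $\ad_{\bar e}^k(a)\equiv D^k(a)\pmod{\R e}$ is exactly the paper's). The only difference is that at the decomposition step the paper simply invokes Aubert--Medina's structure result (Propositions 3.1 and 3.2 of \cite{Aubert}), whereas you re-derive it by explicitly extracting $(\xi,D,b_0)$ and checking the admissibility conditions \eqref{eq3}--\eqref{eq6} as $B$-projections of \eqref{eqs1}--\eqref{eqs3}, with the sign of $\xi$ fixed, as you note, by matching \eqref{bracket}.
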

\begin{proof}
	Let $e$ be a non-null vector in $Z(\G)\cap Z(\G)^\bot$ and put $\mathrm{I}=\R e$. According to Theorem \ref{twosided}, $\mathrm{I}$ is a totally isotropic two-sided ideal with respect to the Levi-Civita product. Moreover, $\mathrm{I}^\bot$ is also a two sided ideal. Then, according to \cite{Aubert}, $(\G,\prs)$ is a double extension of flat Lorentzian Lie algebra $(B,\prs_B)$.  From \eqref{bracket} and the fact that $\G$ is nilpotent we deduce that $D$ is a nilpotent endomorphism, and $B$ is a nilpotent Lie algebra.
\end{proof}
\begin{rem}
	According to \cite{Aubert}, flat Lorentzian nilpotent Lie algebra are double extension of abelian Euclidean Lie algebras. Then flat pseudo-Euclidean nilpotent Lie algebras of signature $(2,n-2)$ are obtained by applying twice the double extension process, starting from abelian Euclidean Lie algebras.
\end{rem}
\begin{exem}
Let $(\G,\prs)$ be a 4-dimensional flat  pseudo-Euclidean nilpotent Lie algebras of signature $(2,2)$. According to theorem \ref{double}, $(\G,\prs)$ is a double extension of a 2-dimensional abelian Lorentzian Lie algebra $(B,\prs_B)$ with $D^2=0$. The conditions \eqref{eq3}-\eqref{eq6} are equivalent to $[D,\xi]=\xi^2$ and $D-\xi$ is skew-symmetric, which implies that $D=\xi$. Then there exists a basis $\{e_1,e_2\}$ of $B$ such that the matrix of $D$ in this basis has the form \[
\left(\begin{array}{cc}0&\alpha\\0&0\end{array}\right),\ \mbox{where }\alpha\in\R.
\]  Let $\prs_B$ be any Lorentzian metric in $B$. Then according to \eqref{bracket}, $\G=\mbox{span}\{\bar{e},e,e_1,e_2\}$ with the non vanishing Lie brackets
\[
[\bar{e},e_1]=\beta e,\ [\bar{e},e_2]=\alpha e_1+\gamma e,\ [e_1,e_2]=\delta e, \mbox{where }\alpha,\ \beta,\ \gamma,\ \delta\in\R,
\]
and the metric in $\G$ is an extension orthogonal of $\prs_B$ such that $\langle \bar{e},\bar{e}\rangle=\langle e,e\rangle=0$ and $\langle\bar{e},e\rangle=1$. It is easy to show that $\G$ is isomorphic to one of the following Lie algebras:
\begin{itemize}
	\item $\R^4$: The 4-dimensional abelian Lie algebra (if $\alpha=\beta=\gamma=\delta=0$).
	\item $\mathcal{H}_3\oplus\R$: The extension trivial of $\mathcal{H}_3$ (if $\alpha=0$ and $(\beta,\gamma)\neq(0,0)$ or $\alpha\neq0$ and $\beta=\delta=0$).
	\item The 4-dimensional filiform Lie algebra: $[\bar{e},e_1]=e,\ [\bar{e},e_2]=e_1$ (If $\alpha\neq0$ and $(\beta,\delta)\neq(0,0)$).
	
\end{itemize}

\end{exem}

\section{Flat pseudo-Euclidean 2-step nilpotent Lie algebras}
A 2-step nilpotent Lie algebra is a non-abelian Lie algebra $\G$ which satisfies $[\G,\G]\subset Z(\G)$. Let $(\G,\prs)$ be a flat pseudo-Euclidean 2-step nilpotent Lie algebra. In \cite{guediri}, the author showed that if the metric $\prs$ is Lorentzian, then $\G$ is an extension trivial of $\mathcal{H}_3$, where $\mathcal{H}_3$ is a 3-dimensional Heisenberg Lie algebra. Let us studies some properties of $(\G,\prs)$ in other signatures.

We consider $N(\G)=\bigcap_{u\in Z(\G)}\ker\mathrm{L}_u$, and $\h_0:=N(\G)^\perp$. According to Lemma \ref{crucial}, $\h_0\subset N(\G)$. If $N(\G)\neq\G$, then $N(\G)$ is degenerate. On the other hand, for any $z\in Z(\G)$, $a\in N(\G)$ and $u\in\G$ we have
	\[ \langle u.a,z\rangle=-\langle a,z.u\rangle=\langle z.a,u\rangle=-\langle a.u,z\rangle=0. \]
	This implies that $\G.N(\G)\subset Z(\G)^\perp$ and $N(\G).\G\subset Z(\G)^\perp$. Thus 
	\begin{equation}\label{gg0}
	[\G,N(\G)]\subset Z(\G)\cap Z(\G)^\perp.
	\end{equation}

\begin{pr}\label{dege}
	Let $(\G,\prs)$ be a flat pseudo-Euclidean 2-step nilpotent Lie algebra. Then
	\begin{enumerate}
	\item $Z(\G)$ is degenerate.
	\item For any $e\in Z(\G)\cap Z(\G)^\bot$, $\mathrm{L}_{e}=\mathrm{R}_{e}=0.$
	\item For any $x,y\in Z(\G)^\bot$, $\langle[x,y],[x,y]\rangle=0.$
	\end{enumerate}
\end{pr}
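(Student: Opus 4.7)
The plan is to prove the three items in sequence, relying on the identity \eqref{gg0} from just before the statement and on the fact that $\h_0\subset N(\G)$ (hence $\h_0$ is totally isotropic in $\G$), which holds in arbitrary signature since it is derived in the proof of Lemma~\ref{crucial} using only \eqref{center}.

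For (1), I argue by contradiction. If $Z(\G)$ were non-degenerate, then $Z(\G)\cap Z(\G)^\perp=\{0\}$, so \eqref{gg0} would force $[\G,N(\G)]=0$; combined with the standing inclusion $Z(\G)\subset N(\G)$, this yields $N(\G)=Z(\G)$ and hence $\h_0=Z(\G)^\perp$. But $\h_0$ is totally isotropic while $Z(\G)^\perp$ is non-degenerate as the orthogonal of a non-degenerate subspace, so $Z(\G)^\perp=\{0\}$ and $\G$ is abelian, contradicting the 2-step nilpotent hypothesis. For (2), given $e\in Z(\G)\cap Z(\G)^\perp$, the inclusion $[\G,\G]\subset Z(\G)$ together with $e\in Z(\G)^\perp$ gives $\langle e,[w,v]\rangle=0$ for all $w,v\in\G$; Koszul's formula \eqref{lc} applied to $u=e\in Z(\G)$ then reduces to $2\langle\mathrm{L}_ev,w\rangle=\langle[w,v],e\rangle=0$, so $\mathrm{L}_e=0$, and \eqref{center} gives $\mathrm{R}_e=\mathrm{L}_e=0$.

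For (3), fix $u,v\in Z(\G)^\perp$. First, $u.v\in Z(\G)$: applying \eqref{lc} with test vector $w\in Z(\G)^\perp$ makes each of the three bracket terms pair an element of $Z(\G)$ (the bracket, by 2-step nilpotency) with an element of $Z(\G)^\perp$, so all vanish and $u.v\perp Z(\G)^\perp$. Since $u.v\in Z(\G)$, \eqref{center} gives $\mathrm{L}_u(u.v)=\mathrm{L}_{u.v}u\in\mathrm{Im}\,\mathrm{L}_{u.v}\subset\h_0$, and $\langle u,\mathrm{L}_{u.v}u\rangle=0$ by skew-symmetry of $\mathrm{L}_{u.v}$; feeding this into $\langle u.v,u.v\rangle=-\langle v,\mathrm{L}_u^2v\rangle$ and its obvious variants yields $\langle u.v,u.v\rangle=-\langle u.v,v.u\rangle$ and $\langle u.u,u.v\rangle=\langle u.u,v.u\rangle=\langle u.u,u.u\rangle=0$, together with the $u\leftrightarrow v$ symmetric versions. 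The key polarization step then delivers $\langle u.u,v.v\rangle=0$: expanding $0=\langle(u+v).(u+v),(u+v).(u+v)\rangle$ and cancelling via the identities just listed leaves only $2\langle u.u,v.v\rangle$.

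To finish, I compare two expansions of $\langle[u,v],[u,v]\rangle$: the direct expansion $\langle u.v-v.u,u.v-v.u\rangle=4\langle u.v,u.v\rangle$; and an indirect one using \eqref{eqs1} (that is, $\mathrm{L}_{[u,v]}=[\mathrm{L}_u,\mathrm{L}_v]$) to expand $\mathrm{L}_u[u,v]$ and $\mathrm{L}_v[u,v]$ as triple products, which yields $\langle[u,v],[u,v]\rangle=2\langle u.v,v.u\rangle-2\langle u.u,v.v\rangle=-2\langle u.v,u.v\rangle$. Equating the two expressions gives $6\langle u.v,u.v\rangle=0$, whence $\langle[u,v],[u,v]\rangle=0$. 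The most delicate point is the polarization bookkeeping for $\langle u.u,v.v\rangle=0$; all other manipulations are iterated uses of skew-symmetry combined with \eqref{center}.
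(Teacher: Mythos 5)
Your proof is correct. Items (1) and (2) are in substance the paper's own arguments: for (1) you use \eqref{gg0} exactly as the paper does (the paper splits into the cases $N(\G)=\G$ and $N(\G)\neq\G$, while you fold both into one by noting that $\h_0=Z(\G)^\perp$ would then be simultaneously totally isotropic and non-degenerate, hence zero); for (2) you replace the paper's appeal to \eqref{eq7} (which makes $\mathrm{L}_e=\mathrm{R}_e$ both symmetric and skew-symmetric) by the equivalent direct Koszul computation. The genuine divergence is in (3). The paper tests Koszul's formula \eqref{lc} against \emph{all} $w\in\G$: since $[w,x],[w,y]\in[\G,\G]\subset Z(\G)$ are orthogonal to $x,y\in Z(\G)^\perp$, this yields the much stronger identity $x.y=\tfrac{1}{2}[x,y]$, so $x.x=0$ and $y.x=-x.y$, after which flatness ($\mathrm{L}_{[x,y]}=[\mathrm{L}_x,\mathrm{L}_y]$) together with the central swap $\mathrm{L}_{[x,y]}=\mathrm{R}_{[x,y]}$ gives $\langle[x,y],[x,y]\rangle=-2\langle[x,y].x,y\rangle=0$ in two lines. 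You only tested against $w\in Z(\G)^\perp$, so you got the weaker conclusion $u.v\in Z(\G)$ and had to compensate with the central-swap and skew-symmetry identities, the polarization $\langle u.u,v.v\rangle=0$, and the two expansions of $\langle[u,v],[u,v]\rangle$; I checked the bookkeeping ($4\langle u.v,u.v\rangle$ on one side, $2\langle u.v,v.u\rangle-2\langle u.u,v.v\rangle=-2\langle u.v,u.v\rangle$ on the other, where the second expansion tacitly uses $\mathrm{L}_u[u,v]=\mathrm{L}_{[u,v]}u$ before applying \eqref{eqs1}, a swap you had already justified) and it all holds, so nothing is missing -- it is simply much longer than necessary. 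If you upgrade your first observation to $u.v=\tfrac{1}{2}[u,v]$, your item (3) collapses to the paper's two-line argument.
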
 
\begin{proof}
\begin{enumerate}
\item Suppose that $Z(\G)$ is non degenerate, i.e., $Z(\G)\cap Z(\G)^\perp=\{0\}$. 
\begin{itemize}
\item If $\G=N(\G)$ then according to \eqref{gg0}, $[\G,\G]=0$ which is impossible. 
\item If $\G\not=N(\G)$ then $[\G,N(\G)]=0$ and hence $N(\G)=Z(\G)$ which is impossible since $N(\G)$ is degenerate.
\end{itemize}
\item Let $e\in Z(\G)\cap Z(\G)^\bot$. Since  $Z(\G)^\perp\subset[\G,\G]^\perp$, then according to \eqref{eq7}, $\mathrm{L}_{e}=\mathrm{R}_{e}$ is both symmetric and skew-symmetric and hence must vanish.
\item According to \eqref{lc}, we have for any $x,y\in Z(\G)^\bot$ $x.y=\frac{1}{2}[x,y]$. Using \eqref{eqs1}, we have $[x,y].x=x.(y.x)-y.(x.x)$, then $[x,y].x=0$. In particular $\langle[x,y].x,y\rangle=0$. Since $\mathrm{L}_x$ is skew-symmetric, thus $\langle[x,y],[x,y]\rangle=0.$ 
\end{enumerate}
\end{proof}
\begin{pr}
Let $(\G,\prs)$ be a flat pseudo-Euclidean 2-step nilpotent Lie algebra. Then $(\G,\prs)$ is obtained by a sequence of double extension, starting from an abelian pseudo-Euclidean Lie algebra.
\end{pr}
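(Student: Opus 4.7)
The plan is to argue by induction on $\dim\G$, using Proposition \ref{dege} to produce a totally isotropic two-sided ideal at each step and then invoking the double extension construction of Aubert and Medina exactly as in the proof of Theorem \ref{double}. The base case is the abelian pseudo-Euclidean Lie algebra, which trivially serves as the starting point of the sequence.

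For the inductive step, I would start from the fact that $Z(\G)$ is degenerate (Proposition \ref{dege}(1)), so one can pick a non-zero $e\in Z(\G)\cap Z(\G)^\bot$; such an $e$ is automatically isotropic since $Z(\G)\cap Z(\G)^\bot\subset Z(\G)^\bot$. By Proposition \ref{dege}(2), $\mathrm{L}_e=\mathrm{R}_e=0$, so $\mathrm{I}=\R e$ is a totally isotropic two-sided ideal for the Levi-Civita product, and hence $\mathrm{I}^\bot$ is likewise a two-sided ideal. Applying the converse half of the Aubert--Medina construction (Propositions 3.1 and 3.2 of \cite{Aubert}, already used in the proof of Theorem \ref{double}), one concludes that $(\G,\prs)$ is a double extension of the flat pseudo-Euclidean Lie algebra $(B,\prs_B):=(\mathrm{I}^\bot/\mathrm{I},\prs|_{\mathrm{I}^\bot}/\mathrm{I})$ according to some admissible quadruple $(\xi,D,\mu,b_0)$.

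To close the induction, I would verify that $(B,\prs_B)$ is either abelian or again flat pseudo-Euclidean 2-step nilpotent. Flatness is immediate from the construction. For the 2-step nilpotency, $B$ is a quotient of the subalgebra $\mathrm{I}^\bot$ of a 2-step nilpotent Lie algebra, hence satisfies $[B,[B,B]]=0$, so $B$ is abelian or 2-step nilpotent; in either case its dimension is $\dim\G-2$, so the induction hypothesis applies (or, in the abelian case, terminates the recursion). Stringing the double extensions together produces $\G$ from an abelian pseudo-Euclidean Lie algebra by a finite sequence of double extensions.

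The main obstacle is the verification that $\mathrm{I}=\R e$ really is a two-sided ideal for the Levi-Civita product; but this step is actually free of calculation here, because Proposition \ref{dege}(2) gives $\mathrm{L}_e=\mathrm{R}_e=0$ outright. A secondary point to be careful about is that in the Aubert--Medina framework one needs $\mathrm{I}^\bot$ to be a two-sided ideal as well, and this follows formally from $\mathrm{I}$ being so and from the skew-symmetry of left multiplications. Once these two facts are in hand, the rest is a routine unwinding of the double extension recipe.
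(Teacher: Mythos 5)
Your proposal is correct and follows essentially the same route as the paper: take a null vector $e\in Z(\G)\cap Z(\G)^\bot$ (available by Proposition \ref{dege}(1)), use $\mathrm{L}_e=\mathrm{R}_e=0$ from Proposition \ref{dege}(2) to get the totally isotropic two-sided ideal $\R e$, apply the Aubert--Medina double extension, observe that the base $B$ is abelian or again flat pseudo-Euclidean $2$-step nilpotent, and iterate. The only cosmetic difference is the termination: you induct on dimension (which drops by $2$ at each step), whereas the paper stops the iteration by invoking the nonexistence of flat Euclidean metrics on $2$-step nilpotent Lie algebras; both are valid.
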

\begin{proof}
Let e be a non-null vector in $Z(\G)\cap Z(\G)^\bot$. Since $\mathrm{L}_{e}=\mathrm{R}_{e}=0$, then $\mathrm{I}=\R e$ is a totally isotropic two sided ideal, and $\mathrm{I}^\bot$ is also a two sided ideal. Thus, $(\G,\prs)$ is a double extension of a pseudo-Euclidean Lie algebra $(B_1,\prs_1)$. According to \eqref{bracket}, $B_1$ is either abelian or 2-step nilpotent. If $B_1$ is 2-step nilpotent, then it's also a double extension of $(B_2,\prs_2)$. Since a 2-step nilpotent Lie algebra can not admit a flat Euclidean metric, then there exists $k\in\N^*$ such that $B_k$ is abelian.  
\end{proof}
\begin{pr}\label{abelian}
	Let $(\G,\prs)$ be a flat pseudo-Euclidean 2-step nilpotent Lie algebra of signature $(p,p+q)$. If $\dim(Z(\G)\cap Z(\G)^\perp)=p$ then $Z(\G)^\perp$ is abelian.
\end{pr}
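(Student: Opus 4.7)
\emph{Plan.} The argument splits into two stages. First I would show that $[x,y] \in Z(\G) \cap Z(\G)^\perp$ whenever $x,y \in Z(\G)^\perp$; then I would upgrade this to strict vanishing $[x,y] = 0$ using the flat structure. Write $W := Z(\G) \cap Z(\G)^\perp$ for brevity.

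For the first stage, $[x,y] \in Z(\G)$ by 2-step nilpotency, and $\langle[x,y],[x,y]\rangle = 0$ by Proposition~\ref{dege}(3). The hypothesis $\dim W = p$ makes $W$ a maximally isotropic subspace in the signature $(p,p+q)$ (the Witt index is $p$), so the induced form on $W^\perp / W$ is positive definite of dimension $q$. Because $W \subset Z(\G) \subset W^\perp$, the form on $Z(\G)/W$ is positive definite, forcing every isotropic vector of $Z(\G)$ to lie in $W$. Hence $[x,y] \in W$.

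For the second stage, pick an isotropic subspace $W' \subset \G$ dual to $W$ (so $\G = W^\perp \oplus W'$ with $W \times W' \to \R$ a perfect pairing). It suffices to show $\langle[x,y], w'\rangle = 0$ for every $w' \in W'$. The Koszul formula \eqref{lc}, together with $[\G,\G] \subset Z(\G)$, gives $\langle[x,y], w'\rangle = 2\langle \mathrm{L}_{w'} y, x\rangle$, so the problem reduces to proving $\mathrm{L}_{w'}(Z(\G)^\perp) \subset Z(\G)$ for every $w' \in W'$. The key input is the flatness relation $\mathrm{L}_{[x,w']} = [\mathrm{L}_x, \mathrm{L}_{w'}]$ from \eqref{eqs1}: evaluating it at $x$ itself, and using $x.x = 0$, $\mathrm{L}_{w'}(W) = 0$ (Proposition~\ref{dege}(2)), and $\mathrm{L}_z = \mathrm{R}_z$ for $z \in Z(\G)$ (from \eqref{center}), one obtains
\[
\mathrm{L}_x^2 w' \;=\; 2\,\mathrm{L}_x \mathrm{L}_{w'} x.
\]
Expanding both sides in a Witt decomposition $\G = W \oplus U \oplus W'$ (with $U$ positive definite of dimension $q$) and pairing with basis vectors of $W'$ yields, for each $x \in Z(\G)^\perp$, a homogeneous system in the $W$-coefficients of $[x, Z(\G)^\perp]$ whose only solution is zero.

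The main obstacle is this last componentwise extraction. From the first stage alone, Milnor's theorem applied to the nilpotent flat Euclidean quotient $W^\perp/W$ already yields the weaker statement $[W^\perp, W^\perp] \subset W$; the hard content of the proposition is therefore the upgrade from $[Z(\G)^\perp, Z(\G)^\perp] \subset W$ to the strict vanishing $[Z(\G)^\perp, Z(\G)^\perp] = 0$. The maximality $\dim W = p$ is used essentially: it forces $W'$ to have dimension $p$, providing enough transversal test directions for the flatness identity above to constrain every coefficient of a candidate nonzero bracket in $W$ and force it to vanish.
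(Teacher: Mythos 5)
Your stage one and the reduction $\langle[x,y],w'\rangle=2\langle \mathrm{L}_{w'}y,x\rangle$ are correct, and the identity $\mathrm{L}_x^2w'=2\,\mathrm{L}_x\mathrm{L}_{w'}x$ does follow from \eqref{eqs1} together with $x.x=0$ and $\mathrm{L}_{[x,w']}=\mathrm{R}_{[x,w']}$. Up to that point you are on the paper's track: the paper also first places $[x,y]$, $z.\bar{e}_k$ and $[\bar{e}_k,x]$ in $W:=Z(\G)\cap Z(\G)^\perp$ by combining isotropy with definiteness of the complement, and then plays flatness against the dual isotropic directions; its version of your identity is the stronger $x.(\bar{e}_k.x)=0$, available because $[\bar{e}_k,x]\in W$ forces $\mathrm{L}_{[\bar{e}_k,x]}=0$ by Proposition \ref{dege}.

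The genuine gap is the step you yourself flag as ``the main obstacle'': the componentwise extraction is only asserted, and the justification you sketch for it points at the wrong mechanism. What you describe is ``a homogeneous system in the $W$-coefficients whose only solution is zero'', with the maximality $\dim W=p$ supplying ``enough transversal test directions''; but a homogeneous linear system has no reason to have only the trivial solution, and for a fixed $w'$ no count of test directions gives the vanishing. The actual mechanism is a positivity argument: one must first check that $\mathrm{L}_{w'}x\in Z(\G)^\perp$ and $[x,w']\in W$ (both rest on $\mathrm{L}_z w'\in W$ for $z\in Z(\G)$, again isotropy plus definiteness), then write $\mathrm{L}_{w'}x=\omega+\sum_i\lambda_i b_i$ with $\omega\in W$ and $\{b_i\}$ an orthonormal basis of the definite complement of $W$ in $Z(\G)^\perp$, where Koszul gives $\lambda_i=-\tfrac12\langle[x,b_i],w'\rangle$; using $\mathrm{R}_\omega=0$ (Proposition \ref{dege}) and $x.b_i=\tfrac12[x,b_i]$, pairing your identity with $w'$ yields $-\sum_i\lambda_i^2=-2\sum_i\lambda_i^2$, hence $\sum_i\lambda_i^2=0$, and it is the definiteness of that complement — a sum of squares of real numbers — that kills every coefficient; then $[x,b_i]\in W$ and the perfect pairing $W\times W'$ give $[x,b_i]=0$. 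The role of $\dim W=p$ is thus to make the complement definite, not to provide many test vectors. Once this computation is written out, your argument coincides with the paper's proof; as it stands, the decisive estimate is missing and the heuristic offered in its place would not produce it.
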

\begin{proof}
Let $\{e_1,\ldots,e_p\}$ be a basis of $Z(\G)\cap Z(\G)^\perp$, then we can whrite $Z(\G)=Z_1\oplus\mbox{span}\{e_1,\ldots,e_p\}$ where $(Z_1,\prs_{/_{Z_1\times Z_1}})$ is euclidean. In $Z_1^\bot$ we can choose a totaly isotropic subspace $\mbox{span}\{\bar{e}_1,\ldots,\bar{e}_p\}$ such that, $\langle e_i,\bar{e}_j\rangle=0$ for $i\neq j$, and $\langle e_i,\bar{e}_i\rangle=1$. Let $B_1$ be the orthogonal of $Z_1\oplus\mbox{span}\{e_1,\ldots,e_p\}\oplus\mbox{span}\{\bar{e}_1,\ldots,\bar{e}_p\}$. Thus we get a decomposition 
\begin{equation}\label{decomposit}
\G=Z_1\oplus\mbox{span}\{e_1,\ldots,e_p\}\oplus B_1\oplus\mbox{span}\{\bar{e}_1,\ldots,\bar{e}_p\}.
\end{equation} 
We have $Z(\G)^\bot=B_1\oplus\mbox{span}\{\bar{e}_1,\ldots,\bar{e}_p\}$, and $(B_1,\prs_{/_{B_1\times B_1}})$ is euclidean. Let $x,y\in Z(\G)^\bot$, $z\in Z(\G)$ and $k\in\{1,\ldots,p\}$. We have $\langle [x,y],[x,y]\rangle=0$, $z.\bar{e}_k\in Z(\G)^\bot$ and $\langle z.\bar{e}_k,z.\bar{e}_k\rangle=0$. Since $Z_1$ and $B_1$ are euclidean, then $[x,y]$ and $z.\bar{e}_k$ are in $Z(\G)\cap Z(\G)^\perp$. Thus
\[
0=\langle z.\bar{e}_k,x\rangle=-\frac{1}{2}\langle[\bar{e}_k,x],z\rangle,
\]
which implies that $[\bar{e}_k,x]\in Z(\G)\cap Z(\G)^\perp$. Using the flatness of the metric, then $[\bar{e}_k,x].x=\bar{e}_k.(x.x)-x.(\bar{e}_k.x)$, thus $x.(\bar{e}_k.x)=0$. Let $\{b_1,\ldots,b_r\}$ be an orthonormal basis of $B_1$. We have $\bar{e}_k.x\in Z(\G)^\bot$, and $\langle \bar{e}_k.x,b_i\rangle=-\frac{1}{2}\langle [x,b_i],\bar{e}_k\rangle$. Then $\bar{e}_k.x=e_0-\frac{1}{2}\sum_{i=1}^r\langle[x,b_i],\bar{e}_k\rangle b_i$, where $e_0\in Z(\G)\cap Z(\G)^\perp$. Using the fact that $x.b_i=\frac{1}{2}[x,b_i]$, then
\begin{eqnarray*}
x.(\bar{e}_k.x)&=&-\frac{1}{2}\sum_{i=1}^r\langle[x,b_i],\bar{e}_k\rangle x.b_i\\
&=&-\frac{1}{4}\sum_{i=1}^r\langle[x,b_i],\bar{e}_k\rangle [x,b_i].
\end{eqnarray*}	
Finally, we get $\sum_{i=1}^r\langle[x,b_i],\bar{e}_k\rangle^2=0$. Since $[x,b_i]\in Z(\G)\cap Z(\G)^\perp$, thus $Z(\G)^\bot$ is abelian.
\end{proof}
Suppose that $\dim Z(\G)\cap Z(\G)^\perp=1$. Then the decomposition \eqref{decomposit} becomes
\begin{equation}\label{decom2}
\G=Z_1\oplus\R e\oplus B_1\oplus\R\bar{e},
\end{equation}
and the restriction of $\prs$ to $Z_1$ and $B_1$ is nondegenerate.
\begin{pr}\label{definite}
Let $(\G,\prs)$ be a flat pseudo-Euclidean 2-step nilpotent Lie algebra such that $\dim(Z(\G)\cap Z(\G)^\perp)=1$. With notations as in \eqref{decom2}, if the restriction of the metric $\prs$ to $B_1$ is positive or negative definite, then $\dim B_1=1$, and $\G$ is an extension trivial of $\mathcal{H}_3$, where $\mathcal{H}_3$ is the 3-dimensional Heisenberg Lie algebra.     
\end{pr}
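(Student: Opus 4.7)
The plan is to use Koszul's formula together with the commutation identity $\mathrm{L}_{[u,v]}=[\mathrm{L}_u,\mathrm{L}_v]$ of \eqref{eqs1} to force the brackets $[\bar e,b_i]$ into $\R e$ and the brackets $[b_i,b_j]$ to vanish, and then to use the transversality $B_1\cap Z(\G)=\{0\}$ built into the decomposition \eqref{decom2} to conclude $\dim B_1=1$. Fix an orthonormal basis $b_1,\dots,b_r$ of $B_1$ (we may assume the restriction of $\prs$ to $B_1$ is positive definite; the negative definite case is analogous). Since $[\G,\G]\subset Z(\G)=Z_1\oplus\R e$, write
\[ [\bar e,b_i]=\zeta_i+\mu_ie,\qquad [b_i,b_j]=z_1(b_i,b_j)+\alpha(b_i,b_j)\,e, \]
with $\zeta_i,z_1(b_i,b_j)\in Z_1$ and $\mu_i,\alpha(b_i,b_j)\in\R$. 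Koszul's formula \eqref{lc} gives $\bar e.b_i=\tfrac12\zeta_i+\mu_ie-\tfrac12\sum_l\alpha(b_i,b_l)b_l$, $\bar e.z=-\tfrac12\sum_k\langle\zeta_k,z\rangle b_k$ for $z\in Z_1$, $z.b_i=\tfrac12\langle\zeta_i,z\rangle e+\tfrac12\sum_k\langle z_1(b_k,b_i),z\rangle b_k$, $\bar e.\bar e=-\sum_i\mu_ib_i$, and $b_i.b_j=\tfrac12[b_i,b_j]$.

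The first step kills $\zeta$. For $z\in Z_1\subset Z(\G)$ one has $[\bar e,z]=0$, so $[\mathrm{L}_{\bar e},\mathrm{L}_z]=0$ by \eqref{eqs1}; applied to $\bar e$ (using $\mathrm{L}_z=\mathrm{R}_z$ from \eqref{center}) this reads $\bar e.(\bar e.z)=z.(\bar e.\bar e)$. Plugging in the formulas and reading off the $Z_1$-component yields
\[ \sum_k\langle\zeta_k,z\rangle\zeta_k=0\ \text{in }Z_1,\ \forall\,z\in Z_1. \]
Pairing with $z$ itself gives $\sum_k\langle\zeta_k,z\rangle^2=0$, and real positivity forces $\langle\zeta_k,z\rangle=0$ for every $k$ and every $z\in Z_1$; nondegeneracy of $\prs$ on $Z_1$ then implies $\zeta_k=0$. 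Here the Euclidean structure on $B_1$ enters through the orthonormal expansion of $\bar e.z$: for indefinite $B_1$ the identity would carry signs $\epsilon_k$ and the sum-of-squares argument would fail.

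With $\zeta=0$, $[\bar e,b_i]=\mu_ie\in\R e$, so $\mathrm{L}_{[\bar e,b_i]}=0$. Applying $[\mathrm{L}_{\bar e},\mathrm{L}_{b_i}]=0$ to $b_j$ and extracting the $e$-component gives $\sum_l\alpha(b_j,b_l)\alpha(b_i,b_l)=0$, i.e.\ $MM^\top=0$ for the real antisymmetric matrix $M=(\alpha(b_i,b_j))$; since $B_1$ is Euclidean this forces $M=0$, so $\alpha\equiv 0$. For $z_1$, apply $[\mathrm{L}_z,\mathrm{L}_{b_i}]=0$ to $z'\in Z_1$ (now using $\zeta=\alpha=0$); the $b_l$-coefficient becomes, after antisymmetry of $z_1$,
\[ \sum_k\langle z_1(b_k,b_i),z'\rangle\langle z_1(b_k,b_l),z\rangle=0\quad\forall\,i,l,z,z'. \]
Setting $T(z)_{k,i}:=\langle z_1(b_k,b_i),z\rangle\in\R$, this is $T(z')^\top T(z)=0$; choosing $z=z'$ yields $T(z)^\top T(z)=0$, so $T(z)=0$ by positive-semidefiniteness of $T^\top T$, and nondegeneracy of $\prs$ on $Z_1$ then gives $z_1\equiv 0$.

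Finally $[\bar e,b_i]=\mu_ie$ and $[b_i,b_j]=0$. Since $\bar e\notin Z(\G)$ we have $(\mu_i)\neq 0$, and then $B_1\cap Z(\G)=\{(\beta_i)\in\R^r:\sum_i\beta_i\mu_i=0\}$ has dimension $r-1$. The decomposition \eqref{decom2} demands $B_1\cap Z(\G)=\{0\}$, hence $r=1$. Writing $B_1=\R b_1$ with $\mu_1\neq 0$, the subalgebra $\mathrm{span}\{\bar e,b_1,e\}$ is isomorphic to $\mathcal{H}_3$ and commutes with the abelian $Z_1$, so $\G\cong\mathcal{H}_3\oplus Z_1$ is a trivial extension of $\mathcal{H}_3$. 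The main obstacle is the first step: identifying the right flatness identity $[\mathrm{L}_{\bar e},\mathrm{L}_z]\bar e=0$ and correctly extracting its $Z_1$-component to obtain $\zeta=0$; once $\zeta=0$ is in hand, the definiteness of $B_1$ collapses the remaining flatness identities to the matrix kernels $MM^\top=0$ and $T^\top T=0$, which vanish instantly over $\R$.
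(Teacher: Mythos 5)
Your argument is correct and proves the statement, but it takes a more computational route than the paper. The paper first shows, with no curvature identity at all, that $\mathrm{L}_z=0$ for every $z\in Z(\G)$: by \eqref{center}, $\mathrm{L}_z$ is skew-symmetric with $\mathrm{L}_z\circ\mathrm{L}_z=0$, so $z.\bar e$ and then $z.b$ are isotropic vectors lying in the definite space $B_1$, hence zero. From this it gets $[x,y]\in\R e$ for $x,y\in B_1$ and $[\bar e,x]\in Z(\G)\cap Z(\G)^\perp$ almost immediately, and then needs only one flatness identity, $[\bar e,x].x=\bar e.(x.x)-x.(\bar e.x)$, whose orthonormal expansion gives the single sum of squares $\sum_i\langle[x,b_i],\bar e\rangle^2=0$ forcing $B_1$ abelian. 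You instead encode all brackets in the structure constants $\zeta_i,\mu_i,z_1,\alpha$ and use three instances of \eqref{eqs1}, namely $[\mathrm{L}_{\bar e},\mathrm{L}_z]\bar e=0$, $[\mathrm{L}_{\bar e},\mathrm{L}_{b_i}]b_j=0$ and $[\mathrm{L}_z,\mathrm{L}_{b_i}]z'=0$, each collapsing to a real sum-of-squares or Gram-matrix vanishing; I checked your Koszul expansions and component extractions and they are right, and you correctly use only nondegeneracy (not definiteness) of $\prs$ on $Z_1$. Both proofs finish identically: some $\mu_i\neq0$ since $\bar e\notin Z(\G)$, and for $r>1$ a hyperplane of $B_1$ would be central, contradicting $B_1\cap Z(\G)=\{0\}$ (the paper's $b_i'=b_i-\frac{\alpha_i}{\alpha_1}b_1$). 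What the paper's order of steps buys is economy: once $\mathrm{L}_z=0$ is known, your first and third identities become unnecessary; what your version buys is that everything is reduced to explicit matrix identities in the structure constants, at the price of two extra curvature computations. One small point to make explicit: in your second step the products $\bar e.e$ and $b_i.e$ appear (with coefficients $\tfrac12\alpha(b_i,b_j)$ and $\mu_j$), and your extraction of the $e$-component silently uses that they vanish; this is exactly Proposition \ref{dege}(2) ($\mathrm{L}_e=\mathrm{R}_e=0$ for $e\in Z(\G)\cap Z(\G)^\perp$) and should be cited, though it is of course available at this point of the paper.
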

\begin{proof}
Let $z\in Z(\G)$, and $b\in B_1$. We have $z.\bar{e}\in B_1$ and $z.b\in Z(\G)^\bot$. Since $\prs_{/_{B_1\times B_1}}$ is positive definite or negative definite and $\langle z.\bar{e},z.\bar{e}\rangle=0$, then $z.\bar{e}=0$. Thus $\langle z.b,\bar{e}\rangle=0$, which implies that $z.b\in B_1$. Using the same argument, then we can conlude that $z.b=0$, and $\mathrm{L}_z=0$ for any $z\in Z(\G)$.  Let $x,y\in B_1$. We have for any $z\in Z(\G)$
\[
\langle [x,y],z\rangle=2\langle x.y,z\rangle=0,
\]
thus $[x,y]=\alpha e$, where $\alpha\in\R$. Using the flatness of the metric, then we get $[\bar{e},x].x=\bar{e}.(x.x)-x.(\bar{e}.x)$, thus $x.(\bar{e}.x)=0$. Let $\{b_1,\dots,b_r\}$ be an orthonormal basis of $B_1$. Then 
\[
\bar{e}.x=\beta e\mp\frac{1}{2}\sum_{i=1}^r\langle[x,b_i],\bar{e}\rangle b_i
\]
 where $\beta\in\R$. Thus 
\[ 
x.(\bar{e}.x)=\mp\frac{1}{4}\sum_{i=1}^r\langle[x,b_i],\bar{e}\rangle [x,b_i]=0,
\]
which implies that $B_1$ is abelian. On the other hand, we have for any $z\in Z(\G)$, 
\[
0=\langle z.\bar{e},x\rangle=-\frac{1}{2}\langle[\bar{e},x],z\rangle,
\]
 thus $[\bar{e},x]\in Z(\G)\cap Z(\G)^\perp$.
Put $[\bar{e},b_i]=\alpha_i e$, where $\alpha_i\in\R^*$ for any $i\in\{1,\ldots,r\}$. In fact, if $\alpha_i=0$ then $b_i\in Z(\G)$, which contradicts the fact that $Z(\G)\cap B_1=\{0\}$. Suppose that $\dim B_1>1$. For any $i\in\{2,\ldots,r\}$, we put $b'_i=b_i-\frac{\alpha_i}{\alpha_1}b_1$, thus $[\bar{e},b'_i]=0$ and $b'_i\in Z(\G)$ which is a Contradiction. Then $\dim B_1=1$ and the only non vanishing brackets in $\G$ is $[\bar{e},b_1]=\alpha_1 e$, thus $\G$ is an extension trivial of $\mathcal{H}_3$. 
\end{proof}

\section{Flat pseudo-Euclidean 2-step nilpotent Lie algebras of signature $(2,n-2)$}
Let us start by an example which play an important role in this section. Let $\mathrm{L}_6^4$ be a 6-dimenional Lie algebra defined by the non vanishing Lie brackets, giving in the basis $\{x_1,\ldots,x_6\}$ by
\[
[x_1,x_2]=x_5\ ,\ [x_1,x_3]=[x_2,x_4]=x_6.
\]
This Lie algebra appear in the classification of 2-step nilpotent Lie algebras of dimension 6, as for example in \cite[pp.3]{Abiev}, or in \cite[pp.97]{revoy}, where it is denoted by $L_{6,3}$.

It is clear that this Lie algebra admits no flat Euclidean or Lorentzian metrics. However, $\mathrm{L}_6^4$ admits a flat pseudo-Euclidean metrics of signature $(2,n-2)$. In fact, let $\prs_0$ be a pseudo-Euclidean metric of signature $(2,4)$ defined in the basis $\{x_1,\ldots,x_6\}$ by the matrix
\[
\prs_0=\left(\begin{array}{cccccc}0&0&0&0&a&1\\0&0&b&c&0&0\\0&b&0&0&0&0\\0&c&0&d&0&0\\a&0&0&0&\frac{1}{3d}&0\\1&0&0&0&0&0\end{array}\right)\]
where $a,c\in\R$, $b\in\R^*$ and $d>0$. A straightforward calculations using \eqref{lc} shows that, the only non vanishing Levi-Civita products are
\begin{eqnarray*}
&&x_1.x_1=-\frac{1}{b}x_2-\left(\frac{a}{b}+\frac{c^2}{b^2d}\right)x_3+\frac{c}{bd}x_4,\  x_1.x_2=\frac{c}{2bd}x_3-\frac{1}{2d}x_4+\frac{1}{2}x_5+\frac{a}{2}x_6,\\
&&x_1.x_3=x_6,\ x_1.x_4=\frac{1}{2b}x_3,\ x_1.x_5=x_5.x_1=-\frac{1}{6bd}x_3,\ x_2.x_4=\frac{1}{2}x_6,\ x_2.x_5=x_5.x_2=\frac{1}{6d}x_6,\\
&&x_2.x_1=\frac{c}{2bd}x_3-\frac{1}{2d}x_4-\frac{1}{2}x_5+\frac{a}{2}x_6,\ x_4.x_1=\frac{1}{2b}x_3,\ x_4.x_2=-\frac{1}{2}x_6.\\
\end{eqnarray*}
One can verify that for any $x,y\in \mathrm{L}_6^4$, we have $\mathrm{L}_{[x,y]}=\left[\mathrm{L}_x,\mathrm{L}_y\right]$, which shows that $(\mathrm{L}_6^4,\prs_0)$ is flat. The following Theorem shows that this example, is the only non trivial one such that $\dim Z(\G)\cap Z(\G)^\bot=1$.

Let $(\G,\prs)$ be a flat pseudo-Euclidean 2-step nilpotent Lie algebra of signature $(2,n-2)$. According to theorem \ref{degenerate}, the dimension of $Z(\G)\cap Z(\G)^\bot$ is 1 or 2.
\begin{theo}\label{dim1}
	A 2-step nilpotent Lie algebra $\G$ admits a flat pseudo-Euclidean metric of signature $(2,n-2)$ such that $\dim Z(\G)\cap Z(\G)^\bot=1$ if and only if $\G$ is an extension trivial of $\mathcal{H}_3$ or $\G$ is an extension trivial of $\mathrm{L}_6^4$. Furthermore, in the second case, the restriction of the metric to $\mathrm{L}_6^4$ is giving by $\prs_0$.
\end{theo}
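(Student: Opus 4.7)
The ``if'' direction is already in hand from the material preceding the theorem: the Heisenberg case is supplied by Proposition \ref{definite}, and the explicit verification immediately above gives a flat metric $\prs_0$ on $\mathrm{L}_6^4$ of signature $(2,4)$ with $\dim(Z(\G)\cap Z(\G)^\bot)=1$; adjoining any orthogonal flat Euclidean abelian factor preserves flatness, 2-step nilpotency, the signature type $(2,n-2)$, and the condition $\dim(Z(\G)\cap Z(\G)^\bot)=1$. The plan is therefore to prove the ``only if'' direction.

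First I would apply the canonical decomposition \eqref{decom2}, $\G=Z_1\oplus\R e\oplus B_1\oplus\R\bar e$, with $\langle e,\bar e\rangle=1$ and $\langle e,e\rangle=\langle\bar e,\bar e\rangle=0$. Since $(e,\bar e)$ contributes signature $(1,1)$ to the total $(2,n-2)$, a signature count forces exactly one of two subcases: either $Z_1$ is Lorentzian and $B_1$ is Euclidean, or $Z_1$ is Euclidean and $B_1$ is Lorentzian. The first subcase reduces directly to Proposition \ref{definite} and yields a trivial extension of $\mathcal{H}_3$; so the substantial case is the second.

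In this remaining subcase, $Z_1$ is positive definite and $e\perp Z_1$, so the isotropic cone of $Z(\G)$ equals $\R e$. Combined with Proposition \ref{dege}(3) and $[\G,\G]=[Z(\G)^\bot,Z(\G)^\bot]$, this forces $[B_1,B_1]\subset\R e$. Hence the entire Lie bracket is encoded by a $2$-form $\omega\in\Lambda^2 B_1^*$ (with $[x,y]=\omega(x,y)e$ on $B_1$) together with a linear map $D=\mathrm{ad}_{\bar e}|_{B_1}\colon B_1\to Z(\G)$, necessarily nonzero because $\bar e\notin Z(\G)$. I would then compute $\mathrm{L}_{\bar e}$, $\mathrm{L}_z$ for $z\in Z_1$, and the Levi-Civita products on $B_1$ from Koszul's formula \eqref{lc} in terms of $\omega$, $D$, and the restricted bilinear form on $B_1$, and plug them into the flatness identity \eqref{eqs1} for the pairs $(\bar e,b)$, $(\bar e,z)$, and $(b,b')$ with $b,b'\in B_1$, $z\in Z_1$.

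The main obstacle will be to extract from these algebraic relations the sharp structural claim that, on the nonabelian core of $\G$, one has $\dim B_1=3$, $\dim Z_1=1$, and that $\omega$ and $D$ each have rank $2$, with the orthogonal complement of $\mathrm{Im}(D)\cap Z_1$ inside $Z_1$ splitting off as an orthogonal abelian direct summand of $\G$. Once this is established, choosing a null basis $(x_2,x_3,x_4)$ of the residual Lorentzian $B_1$ so that $D(x_2)$ generates $Z_1$ modulo $\R e$, $D(x_3)\in\R e$, and $x_4\in\ker D$, then setting $x_1=\bar e$ and choosing a basis $(x_5,x_6)$ of $Z(\G)$, recovers the brackets $[x_1,x_2]=x_5$, $[x_1,x_3]=[x_2,x_4]=x_6$ of $\mathrm{L}_6^4$. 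The flatness-constrained Gram matrix of $\prs$ in this basis then must equal $\prs_0$ for some $a\in\R$, $b\in\R^*$, $c\in\R$, $d>0$, completing the proof.
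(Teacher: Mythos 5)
Your skeleton coincides with the paper's own strategy: the decomposition \eqref{decom2}, the dichotomy on where the extra non-Euclidean direction sits (definite $B_1$ settled by Proposition \ref{definite}, Lorentzian $B_1$ as the substantial case), and the encoding of the bracket by a $2$-form on $B_1$ together with $D=\mathrm{ad}_{\bar e}|_{B_1}$. But the argument stops exactly where the theorem's content begins: you label as ``the main obstacle'' the assertions $\dim B_1=3$, $\mathrm{rank}\,\omega=\mathrm{rank}\,D=2$, the orthogonal splitting off of an abelian factor, and the identification of the Gram matrix with $\prs_0$, and none of these is derived from flatness. That derivation is the whole hard direction. In the paper it is carried out by a specific chain of consequences of \eqref{lc} and \eqref{eqs1}: first $\mathrm{R}_{\bar e}(Z(\G))$ is totally isotropic in the Lorentzian $B_1$, so $z.\bar e=\lambda(z)b_0$ for one isotropic vector $b_0$; then applying $[x,y].z=x.(y.z)-y.(x.z)$ to the triples $(\bar e,b_0,b_0)$, $(\bar e,b_0,\bar b)$ and $(\bar e,b_i,b_i)$ kills $[b_0,B_1]$ and $[b_i,b_j]$; the triple $(\bar e,\bar b,\bar b)$ yields the key constraint $3\langle z_0,z_0\rangle=\sum_i\beta_i^2$ (equation \eqref{eqq}), which simultaneously rules out $\dim B_1=2$, forces $z_0\neq0$ (hence $\dim B_1\geq3$), and is precisely what pins the metric down to $\prs_0$; finally $\dim B_1\leq3$ holds because otherwise combinations $b_i-\tfrac{\beta_i}{\beta_1}b_1-\tfrac{\alpha_i\beta_1-\alpha_1\beta_i}{\alpha_0\beta_1}b_0$ would be central, contradicting $Z(\G)\cap B_1=\{0\}$. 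Without some such computation your conclusion that $\G$ is a trivial extension of $\mathrm{L}_6^4$ with metric $\prs_0$ is unproved; the proposal is a correct plan, not a proof.

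Two further points. The identity $[\G,\G]=[Z(\G)^\perp,Z(\G)^\perp]$ you invoke is false in this setting: $Z(\G)$ is degenerate, so $Z(\G)+Z(\G)^\perp$ is a proper hyperplane and $\mathrm{ad}_{\bar e}$ contributes essentially; in $(\mathrm{L}_6^4,\prs_0)$ itself $[Z(\G)^\perp,Z(\G)^\perp]=\R x_6$ while $[\G,\G]=\mathrm{span}\{x_5,x_6\}$, so taking that identity seriously would force $[\G,\G]\subset\R e$ and wrongly exclude the $\mathrm{L}_6^4$ case. (You do not need it: $[B_1,B_1]\subset\R e$ already follows from $2$-step nilpotency, Proposition \ref{dege}(3) and the fact that the isotropic vectors of $Z(\G)=Z_1\oplus\R e$ lie in $\R e$.) Also, Proposition \ref{definite} gives only the ``only if'' structure in the Heisenberg case; for the ``if'' direction you still have to exhibit a flat metric of signature $(2,n-2)$ with $\dim(Z(\G)\cap Z(\G)^\perp)=1$ on a trivial extension of $\mathcal{H}_3$, as in the remark following the corollary — an easy verification, but not one supplied by that proposition.
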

\begin{proof}
If $\dim Z(\G)\cap Z(\G)^\bot=1$, then we can split $\G$ as
\[
\G=Z_1\oplus\R e\oplus B_1\oplus\R\bar{e},
\]
where $Z(\G)=Z_1\oplus\R e$, $Z(\G)^\bot=\R e\oplus B_1$, $\mbox{span}\{e,\bar{e}\}=\left(Z_1\oplus B_1\right)^\bot$, $\langle e,e\rangle=\langle \bar{e},\bar{e}\rangle=0$ and $\langle e,\bar{e}\rangle=1$. We have two cases:\\
{\bf First case:} $\prs/_{B_1\times B_1}$ is positive or negative definite. Then according to proposition \ref{definite}, $\dim B_1=1$ and $\G$ is an extension trivial of $\mathcal{H}_3$.\\
{\bf Second case:} $\prs/_{B_1\times B_1}$ is Lorentzian. Then $\dim B_1\geq2$. For any $z,z'\in Z(\G)$, we have $\langle z.\bar{e},z'.\bar{e}\rangle=0$, then $\mathrm{R}_{\bar{e}}(Z(\G))$ is a totally isotropic subspace. Since $\mathrm{R}_{\bar{e}}(Z(\G))\subset B_1$ and $(B_1,\prs/_{B_1\times B_1})$ is Lorentzian, then there exists an isotropic vector $b_0\in B_1$ and a covector $\lambda\in Z(\G)^*$ such that $z.\bar{e}=\lambda(z)b_0$ for any $z\in Z(\G)$.\\ 
Let $x,y\in Z(\G)^\bot$. Recall that $x.y=\frac{1}{2}[x,y]$ and $\langle [x,y],[x,y]\rangle=0$. Since $Z_1$ is Euclidean then $[x,y]\in Z(\G)\cap Z(\G)^\bot$. Choose a basis $\{b_0,\bar{b},b_1,\ldots,b_r\}$ of $B_1$ such that $\{b_1,\ldots,b_r\}$ is orthonormal, $\mbox{span}\{b_0,\bar{b}\}$ and $\mbox{span}\{b_1,\ldots,b_r\}$ are orthogonal, $\bar{b}$ is isotropic and $\langle b_0,\bar{b}\rangle=1$. Then for any $i\in\{0,1,\ldots,r\}$, we have from \eqref{lc}
\[
\langle [\bar{e},b_i].\bar{e},b_i\rangle=-\frac{1}{2}\langle [\bar{e},b_i],[\bar{e},b_i]\rangle.
\] 
On the other hand, we have $\langle [\bar{e},b_i].\bar{e},bi\rangle=\langle\lambda\left([\bar{e},b_i]\right)b_0,b_i\rangle=0$, then $[\bar{e},b_i]\in Z(\G)\cap Z(\G)^\bot$.\\
We can write from the condition of flatness, for any $x,y,z\in\G$
\begin{equation}\label{flat}
[x,y].z=x.(y.z)-y.(x.z).
\end{equation} 
If we take $x=\bar{e}$ and $y=z=b_0$, we get $b_0.(\bar{e}.b_0)=0$. Let $i\in\{0,1,\ldots,r\}$, since $\bar{e}.b_0\in Z(\G)^\bot$ and $\langle \bar{e}.b_0,b_i\rangle=-\frac{1}{2}\langle[b_0,b_i],\bar{e}\rangle$, thus 
\[
\bar{e}.b_0=\alpha e+\beta b_0-\frac{1}{2}\sum_{i=1}^r\langle[b_0,b_i],\bar{e}\rangle,
\]
 where $\alpha,\beta\in\R$. It follows that $b_0.(\bar{e}.b_0)=-\frac{1}{4}\sum_{i=1}^r\langle[b_0,b_i],\bar{e}\rangle[b_0,b_i]$, thus $\sum_{i=1}^r\langle[b_0,b_i],\bar{e}\rangle^2=0$ which implies that $[b_0,b_i]=0$ for any $i\in\{0,1,\ldots,r\}$.\\
If we take in \eqref{flat}, $x=\bar{e}$, $y=b_0$ and $z=\bar{b}$ we get $b_0.(\bar{e}.\bar{b})=0$. Using the fact that $b_0.u=0$ for any $u\in Z(\G)$, we deduce that $b_0.(\bar{e}.\bar{b})=-\frac{1}{4}\langle[\bar{b},b_0],\bar{e}\rangle[\bar{b},b_0]$, thus $[\bar{b},b_0]=0$. Similarly, for any $i\in\{1,\ldots,r\}$, if we take in \eqref{flat}, $x=\bar{e}$ and $y=z=e_i$ we get
\[
0=b_i.(\bar{e}.b_i)=-\frac{1}{4}\sum_{j=1}^r\langle[b_i,b_j],\bar{e}\rangle[b_i,b_j],
\]
thus $[b_i,b_j]=0$ for any $i,j\in\{1,\ldots,r\}$. It follows that $\mbox{span}\{b_0,b_1,\ldots,b_r\}$ is abelian and $[b_0,\bar{b}]=0$. We put
\[
[\bar{e},b_i]=\alpha_ie,\ [\bar{e},\bar{b}]=\alpha e+z_0,\ [\bar{b},b_i]=\beta_ie,
\]
where $\alpha_i,\beta_i,\alpha\in\R$, $z_0\in Z_1$ and $i=0,1,\ldots,r$. If we take in \eqref{flat}, $x=\bar{e}$ and $y=z=\bar{b}$ we get $z_0.\bar{b}=-\bar{b}(\bar{e}.\bar{b})$, then $\frac{3}{2}z_0.\bar{b}-\frac{1}{2}\sum_{i=1}^r\beta_i^2e=0$, which implies that 
\begin{equation}\label{eqq}
3\langle z_0,z_0\rangle=\sum_{i=1}^r\beta_i^2.
\end{equation} 
We have $\dim B_1\geq3$. In fact, if $\dim B_1=2$ then $B_1=\mbox{span}\{b_0,\bar{b}\}$ and \eqref{eqq} implies that $z_0=0$. Then the Lie brackets are reduced to $[\bar{e},b_i]=\alpha_ie$ and $[\bar{e},\bar{b}]=\alpha e$, and as in the proof of proposition \ref{definite} we can deduce that $\dim B_1=1$, which is a contradiction. The same argument shows that $z_0\neq0$. Then there exists $i\in\{1,\ldots,r\}$ such that $\beta_i\neq0$. To simplify, we can suppose that $\beta_1\neq0$, and we have also $\alpha_0\neq0$ because $b_0\notin Z(\G)$.\\
Let us show that $\dim B_1=3$. In fact, if $\dim B_1\geq4$, then we put for any $i\geq4$, \[b_i'=b_i-\frac{\beta_i}{\beta_1}b_1-\left(\frac{\alpha_i\beta_1-\alpha_1\beta_i}{\alpha_0\beta_1}\right)b_0,\] and we can verify easly that $[b_i',x]=0$ for any $x\in\G$. Thus $b_i'\in Z(\G)$ which contradicts the fact that $Z(\G)\cap B_1=\{0\}$. We put $x_1=\bar{e}$, $x_2=\bar{b}$, $x_3=\frac{b_0}{\alpha_0}$, $x_4=\frac{1}{\beta_1}b_1-\frac{\alpha_1}{\beta_1\alpha_0}b_0$, $x_5=\alpha e+z_0$ and $x_6=e$. Then the only non vanishing brackets on $\G$ are
\[
[x_1,x_2]=x_5,\ [x_1,x_3]=[x_2,x_4]=x_6.
\]
It follows that $\G$ is an extension trivial of $\mathrm{L}_6^4$. Furthermore, with the condition \eqref{eqq}, one can verify that the restriction of the metric to $\mathrm{L}_6^4$ is given by $\prs_0$. Conversely, if $\G$ splits orthogonaly into $\G=Z_1\oplus\mathrm{L}_6^4$, where $Z_1\subset Z(\G)$ and the restriction of the metric to $\mathrm{L}_6^4$ is $\prs_0$, and the restriction to $Z_1$ is Euclidean, then $(\G,\prs)$ is a flat pseudo-Euclidean 2-step nilpotent Lie algebra of signature $(2,n-2)$ and $\dim Z(\G)\cap Z(\G)^\bot=1$. 
\end{proof}
\begin{co}
The Heisenberg Lie algebra $\mathcal{H}_{2k+1}$ admits a flat pseudo-Euclidean metric of signature $(2,n-2)$ if and only if $k=1$.
\end{co}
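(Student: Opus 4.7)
The plan is to handle the two implications separately, using Theorem \ref{dim1} together with an elementary dimension count on centers.

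For the \emph{if} direction ($k=1$), I would exhibit a flat pseudo-Euclidean metric of signature $(2,1)$ on $\mathcal{H}_3$ by invoking Gu\'ediri's result \cite{guediri}: $\mathcal{H}_3$ admits a flat Lorentzian metric $\mu$ of signature $(1,2)$. Koszul's formula \eqref{lc} is homogeneous of degree one in the metric, so replacing $\mu$ by $-\mu$ leaves the Levi-Civita product unchanged; hence $-\mu$ is still flat, and its signature $(2,1)$ is exactly $(2,n-2)$ for $n=3$.

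For the \emph{only if} direction, I would assume $k\geq2$ and argue by contradiction from the existence of such a metric. Setting $n=2k+1\geq5$, we have $\dim Z(\mathcal{H}_{2k+1})=1$, so Theorem \ref{degenerate} forces $Z(\mathcal{H}_{2k+1})$ to be degenerate, and hence $Z(\mathcal{H}_{2k+1})\subset Z(\mathcal{H}_{2k+1})^\perp$ with $\dim(Z\cap Z^\perp)=1$. Theorem \ref{dim1} then identifies $\mathcal{H}_{2k+1}$ with a trivial extension $\mathcal{H}_3\oplus\mathfrak{a}$ or $\mathrm{L}_6^4\oplus\mathfrak{a}$, where $\mathfrak{a}$ is an abelian direct summand.

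The contradiction comes from the elementary fact that the center of a direct sum of Lie algebras is the direct sum of the centers. Thus $Z(\mathcal{H}_3\oplus\mathfrak{a})$ has dimension $1+\dim\mathfrak{a}$, while $Z(\mathrm{L}_6^4\oplus\mathfrak{a})$ has dimension $2+\dim\mathfrak{a}$ (the defining brackets $[x_1,x_2]=x_5$ and $[x_1,x_3]=[x_2,x_4]=x_6$ show immediately that $Z(\mathrm{L}_6^4)=\mathrm{span}\{x_5,x_6\}$). Equating each of these with $\dim Z(\mathcal{H}_{2k+1})=1$ forces $\dim\mathfrak{a}=0$ in the first case (so $\mathcal{H}_{2k+1}=\mathcal{H}_3$ and $k=1$, contradicting $k\geq2$) and $\dim\mathfrak{a}=-1$ in the second (which is impossible). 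Once Theorems \ref{degenerate} and \ref{dim1} are in hand the argument is essentially dimensional bookkeeping, so I do not foresee any substantive obstacle.
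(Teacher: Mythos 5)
Your argument is correct, and its core coincides with the paper's: since $\dim Z(\mathcal{H}_{2k+1})=1$, degeneracy of the center (Theorem \ref{degenerate}, or Proposition \ref{dege} for the 2-step case) forces $\dim\bigl(Z(\G)\cap Z(\G)^\perp\bigr)=1$, and Theorem \ref{dim1} then leaves only trivial extensions of $\mathcal{H}_3$ or $\mathrm{L}_6^4$; your explicit center-dimension bookkeeping ($1+\dim\mathfrak{a}$ versus $2+\dim\mathfrak{a}$, compared with $1$) just spells out what the paper leaves implicit in the phrase ``by virtue of Theorem \ref{dim1}.'' Where you genuinely diverge is the existence direction for $k=1$: the paper gets it for free from the ``if'' part of Theorem \ref{dim1} (with the explicit flat metrics of signature $(2,1)$ on $\mathcal{H}_3$ recorded in the Remark following it), whereas you produce the metric by flipping the sign of a flat Lorentzian metric on $\mathcal{H}_3$, observing that Koszul's formula \eqref{lc} is unchanged under $\prs\mapsto-\prs$, so flatness and skew-symmetry of left multiplications persist while the signature reverses from $(1,2)$ to $(2,1)$. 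That sign-flip argument is sound and pleasantly elementary, but note one attribution issue: Gu\'ediri's result as cited in the paper is the \emph{necessity} statement (a flat Lorentzian 2-step nilpotent algebra is a trivial extension of $\mathcal{H}_3$), not an existence statement; the existence of a flat Lorentzian metric on $\mathcal{H}_3$ is classical (e.g.\ via the Aubert--Medina double extension) and, within this paper, is most directly supplied by Theorem \ref{dim1} itself, so you should either cite that or exhibit the metric rather than lean on \cite{guediri} for it.
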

\begin{proof}
Since $Z(\mathcal{H}_{2k+1})=1$, then if $\G=\mathcal{H}_{2k+1}$ admits such metric then we have $\dim Z(\G)\cap Z(\G)^\bot=1$. This gives the result, by virtue of Theorem \ref{dim1}.
\end{proof}
\begin{rem}
In theorem \ref{dim1}, if $\G$ is an extension trivial of $\mathcal{H}_3$, then $\G=Z_1\oplus\mathcal{H}_3$ and the metric $\prs$ has one of the following form:
\begin{itemize}
\item The restriction of $\prs$ to $Z_1$ is Euclidean and its restriction to $\mathcal{H}_3$ is given by the matix
\[
\left(\begin{array}{ccc}0&0&\alpha\\0&-1&0\\\alpha&0&0\end{array}\right),\ \mbox{where }\alpha\in\R.
\]
\item The restriction of $\prs$ to $Z_1$ is Lorentzian and its restriction to $\mathcal{H}_3$ is given by the matix
\[
\left(\begin{array}{ccc}0&0&\alpha\\0&1&0\\\alpha&0&0\end{array}\right),\ \mbox{where }\alpha\in\R.
\]
\end{itemize}
\end{rem}
\begin{theo}\label{dim2}
	A 2-step nilpotent Lie algebra $\G$ admits a flat pseudo-Euclidean metric $\prs$ of signature $(2,n-2)$ such that $\dim Z(\G)\cap Z(\G)^\bot=2$ if and only if there exist an orthonormal vectors $\{b_1,\ldots,b_k\}$ in $\G$, a linearly independent isotropic vectors $\{e_1,\bar{e}_1,e_2,\bar{e}_2\}$ in $\{b_1,\ldots,b_k\}^\bot$, where $\langle e_1,e_2\rangle=\langle e_1,\bar{e}_2\rangle=\langle \bar{e}_1,e_2\rangle=\langle \bar{e}_1,\bar{e}_2\rangle=0$ and $\langle e_1,\bar{e}_1\rangle=\langle e_2,\bar{e}_2\rangle=1$, such that for any $i\in\{1,\dots,k\}$ the only non vanishing brackets are
	\begin{eqnarray}\nonumber
	\ [\bar{e}_1,\bar{e}_2]&=&z_0,\\\label{cond1}
	\ [\bar{e}_1,b_i]&=&\alpha_ie_1+\beta_ie_2,\\\nonumber
	\ [\bar{e}_2,b_i]&=&\gamma_ie_1+\delta_ie_2,
	\end{eqnarray}
	where $\alpha_i,\beta_i,\gamma_i,\delta_i\in\R$, and 
	\begin{equation}\label{cond}
	3\langle z_0,z_0\rangle=\sum_{i=1}^k(\gamma_i+\beta_i)^2-4\alpha_i\delta_i.
\end{equation}
\end{theo}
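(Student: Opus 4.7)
The approach is to fix an adapted hyperbolic-Euclidean decomposition of $\G$, use Proposition \ref{abelian} to collapse the allowed brackets, and then extract the precise form stated in the theorem from the flatness identity \eqref{flat} via explicit Koszul computations. Concretely, with $\dim Z(\G)\cap Z(\G)^\bot=2$, pick an isotropic basis $e_1,e_2$ of this intersection, a complement $Z_1$ so that $Z(\G)=Z_1\oplus\mathrm{span}\{e_1,e_2\}$, and isotropic dual vectors $\bar{e}_1,\bar{e}_2$ with $\langle e_i,\bar{e}_j\rangle=\delta_{ij}$ and $\langle\bar{e}_i,\bar{e}_j\rangle=0$. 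Set $B_1:=(Z_1\oplus\mathrm{span}\{e_1,\bar{e}_1,e_2,\bar{e}_2\})^\bot$. The subspace $\mathrm{span}\{e_1,\bar{e}_1,e_2,\bar{e}_2\}$ is a sum of two hyperbolic planes (signature $(2,2)$), so the signature $(2,n-2)$ of $\G$ forces $Z_1$ and $B_1$ to be Euclidean; take an orthonormal basis $\{b_1,\dots,b_k\}$ of $Z_1\oplus B_1$. By Proposition \ref{abelian} (applied with $p=2$), $Z(\G)^\bot=\mathrm{span}\{e_1,e_2\}\oplus B_1$ is abelian, and since $[\G,\G]\subset Z(\G)$ the only possibly nonzero brackets are
$$[\bar{e}_1,\bar{e}_2]=z_0,\qquad [\bar{e}_i,b_j]=z_{ij}+\alpha_{ij}e_1+\beta_{ij}e_2,$$
with $z_0\in Z(\G)$ and $z_{ij}\in Z_1$; those $b_j$ belonging to $Z_1$ are central and contribute zero.

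The decisive step is to show $z_{ij}=0$. A Koszul computation using \eqref{lc} gives $b_j.\bar{e}_1=\tfrac12(\alpha_{2j}-\beta_{1j})e_2$ and $\bar{e}_1.\bar{e}_1\in\mathrm{span}\{e_2\}\oplus B_1$. Plugging these into the flatness relation $[\bar{e}_1,b_j].\bar{e}_1=\bar{e}_1.(b_j.\bar{e}_1)-b_j.(\bar{e}_1.\bar{e}_1)$ and invoking $\mathrm{L}_{e_i}=0$ from Proposition \ref{dege}(2) together with $b_j.b_k=\tfrac12[b_j,b_k]=0$, the right-hand side vanishes, whence $z_{1j}.\bar{e}_1=0$. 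Another application of Koszul shows that for every $z\in Z_1$,
$$z.\bar{e}_1 \;=\; -\tfrac12\langle z_0,z\rangle\,e_2\;-\;\tfrac12\sum_{k}\langle z_{1k},z\rangle\,b_k.$$
Putting $z=z_{1j}$ and comparing $b_k$-coefficients forces $\langle z_{1k},z_{1j}\rangle=0$ for all $k$; in particular $\langle z_{1j},z_{1j}\rangle=0$, and positive-definiteness of $Z_1$ yields $z_{1j}=0$. The identical argument with $[\bar{e}_2,b_j].\bar{e}_2$ gives $z_{2j}=0$.

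With $z_{ij}=0$ in hand, the numerical condition \eqref{cond} comes from the flatness identity $[\bar{e}_1,\bar{e}_2].\bar{e}_2=\bar{e}_1.(\bar{e}_2.\bar{e}_2)-\bar{e}_2.(\bar{e}_1.\bar{e}_2)$. The left-hand side reduces to $z_0.\bar{e}_2=\tfrac12\langle z_0,z_0\rangle\,e_1$ (only the $Z_1$-component of $z_0$ contributes to the norm). Computing the right-hand side from the explicit expressions for $\bar{e}_i.\bar{e}_j$ and $\bar{e}_i.b_k$, its $e_1$-coefficient equals $\tfrac14\sum_k(\alpha_{2k}+\beta_{1k})^2-\sum_k\alpha_{1k}\beta_{2k}$. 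Renaming $\alpha_{1k}=\alpha_k$, $\beta_{1k}=\beta_k$, $\alpha_{2k}=\gamma_k$, $\beta_{2k}=\delta_k$ and equating the two $e_1$-coefficients yields $3\langle z_0,z_0\rangle=\sum_k[(\gamma_k+\beta_k)^2-4\alpha_k\delta_k]$, which is exactly \eqref{cond}.

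The converse is a direct check: starting from the algebraic data and \eqref{cond}, build the metric and the Levi-Civita product by Koszul's formula, observe that the brackets force $\G$ to be 2-step nilpotent with $Z(\G)^\bot$ abelian, and verify the left-symmetric identity $\mathrm{L}_{[x,y]}=[\mathrm{L}_x,\mathrm{L}_y]$ on basis vectors; all pairs are trivial except $(\bar{e}_1,\bar{e}_2)$ acting on $\bar{e}_2$, where \eqref{cond} is precisely the obstruction that must vanish. The main obstacle is the bookkeeping of the Koszul products --- correctly splitting every $u.v$ into its $Z_1$, $\mathrm{span}\{e_1,e_2\}$, $B_1$, and $\mathrm{span}\{\bar{e}_1,\bar{e}_2\}$ components --- rather than any new conceptual input beyond the positive-definiteness argument that eliminates $z_{ij}$.
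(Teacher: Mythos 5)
Your route is essentially the paper's: the same orthogonal decomposition $\G=Z_1\oplus\mathrm{span}\{e_1,e_2\}\oplus B_1\oplus\mathrm{span}\{\bar e_1,\bar e_2\}$, Proposition \ref{abelian} to make $Z(\G)^\bot$ abelian, and Koszul computations plus the flatness identity on the pair $(\bar e_1,\bar e_2)$ to extract \eqref{cond}. The one genuine deviation is that you re-derive the absence of a $Z_1$-component in $[\bar e_i,b_j]$ by a flatness argument, whereas the paper gets this for free: the proof of Proposition \ref{abelian} already shows $[\bar e_k,x]\in Z(\G)\cap Z(\G)^\bot$ for every $x\in Z(\G)^\bot$, so $z_{ij}=0$ from the start. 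Your detour is legitimate (the positive-definiteness argument on $Z_1$ is correct), but note that your formula $b_j.\bar e_1=\tfrac12(\alpha_{2j}-\beta_{1j})e_2$ presupposes part of what you are proving: before $z_{1j}=0$ is known, Koszul gives an extra $Z_1$-component $-\tfrac12 z_{1j}$, so the flatness relation reads $z_{1j}.\bar e_1=-\tfrac12\,z_{1j}.\bar e_1$ rather than ``the right-hand side vanishes''; the conclusion $z_{1j}.\bar e_1=0$ survives, but not by the step as written.

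Two places need repair. First, your stated $e_1$-coefficient of $\bar e_1.(\bar e_2.\bar e_2)-\bar e_2.(\bar e_1.\bar e_2)$ drops the contribution of the $Z_1$-component of $\bar e_1.\bar e_2$: since $\bar e_1.\bar e_2$ contains $\tfrac12 z_0$ (up to $e_1,e_2$-terms that are annihilated afterwards), the term $\bar e_2.(\tfrac12 z_0)=\tfrac14\langle z_0,z_0\rangle e_1$ must be subtracted, so the correct coefficient is $\tfrac14\sum_k(\gamma_k+\beta_k)^2-\sum_k\alpha_k\delta_k-\tfrac14\langle z_0,z_0\rangle$. With the coefficient you wrote, equating with $\tfrac12\langle z_0,z_0\rangle$ would give $2\langle z_0,z_0\rangle=\sum_k\bigl[(\gamma_k+\beta_k)^2-4\alpha_k\delta_k\bigr]$, not \eqref{cond}; only the corrected bookkeeping produces the factor $3$, so as written your derivation is internally inconsistent even though the final formula you quote is the right one. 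Second, in the converse you assert that the only nontrivial instance of $\mathrm{L}_{[x,y]}=[\mathrm{L}_x,\mathrm{L}_y]$ is $(\bar e_1,\bar e_2)$ acting on $\bar e_2$. That reduction is precisely what the paper proves (any triple containing a vector of $Z(\G)+Z(\G)^\bot$ satisfies the identity automatically) and it must be argued, not merely observed; moreover the action on $\bar e_1$ is not automatic either --- it is a second nontrivial check, which happens to reduce to the same condition \eqref{cond} and has to be verified for sufficiency. With these corrections your proof coincides with the paper's.
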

\begin{proof}
According to proposition \ref{abelian}, $Z(\G)^\bot$ is abelian, and we can split $\G$ into 
\begin{equation}\label{split}
\G=Z_1\oplus\mbox{span}\{e_1,e_2\}\oplus B_1\oplus\mbox{span}\{\bar{e}_1,\bar{e}_2\},
\end{equation}
where $Z(\G)=Z_1\oplus\mbox{span}\{e_1,e_2\}$, $Z(\G)^\bot=\mbox{span}\{e_1,e_2\}\oplus B_1$, $\left(Z_1\oplus B_1\right)^\bot=\mbox{span}\{e_1,e_2,\bar{e}_1,\bar{e}_2\}$, $\mbox{span}\{\bar{e}_1,\bar{e}_2\}$ is totally isotropic, $\langle e_1,e_2\rangle=\langle e_1,\bar{e}_2\rangle=\langle \bar{e}_1,e_2\rangle=\langle \bar{e}_1,\bar{e}_2\rangle=0$ and $\langle e_1,\bar{e}_1\rangle=\langle e_2,\bar{e}_2\rangle=1$.\\
In the proof of proposition \ref{abelian}, we have shown that for any $x,y\in Z(\G)^\bot$ and $k\in\{1,2\}$, $[x,y]$ and $[\bar{e}_k,x]$ are in $Z(\G)\cap Z(\G)^\bot$. Let $\{b_1,\ldots,b_r\}$ be an orthonormal basis of $B_1$. Then, the non vanishing brackets are: 
\begin{eqnarray*}
	\ [\bar{e}_1,\bar{e}_2]&=&z_0,\\
	\ [\bar{e}_1,b_i]&=&\alpha_ie_1+\beta_ie_2,\\
	\ [\bar{e}_2,b_i]&=&\gamma_ie_1+\delta_ie_2,
	\end{eqnarray*}
where $z_0\in Z(\G),\alpha_i,\beta_i,\gamma_i,\delta_i\in\R$ and $i=1,\ldots,r$. From \eqref{lc} and the Lie brackets above, we have for any $u\in Z(\G)$ and $v\in Z(\G)^\bot$, $u.v=0$. Recall that $(\G,\prs)$ is flat if and only if for any $x,y,z\in\G$
\begin{equation}\label{flatt}
\mathrm{L}_{[x,y]}(z)=\left[\mathrm{L}_x,\mathrm{L}_y\right](z).
\end{equation}
Let $x\in Z(\G)+Z(\G)^\bot$, $y,z\in\G$ and $i\in\{1,2\}$. We have $\langle y.z,e_i\rangle=0$, then $y.z\in Z(\G)+Z(\G)^\bot$. Thus $x.(y.z)=(y.z).x=0$. On the other hand, we have $x.y,\ y.x\in Z(\G)\cap Z(\G)^\bot$. Thus $(x.y).z=(y.x).z=0$. It follows that if one of the vectors $x,y$ or $z$ is in $Z(\G)+Z(\G)^\bot$, then \eqref{flatt} is satisfied. Thus $(\G,\prs)$ is flat if and only if 
\[
\mathrm{L}_{[\bar{e}_1,\bar{e}_2]}\bar{e}_1-\left[\mathrm{L}_{\bar{e}_1},\mathrm{L}_{\bar{e}_2}\right]\bar{e}_1=\mathrm{L}_{[\bar{e}_1,\bar{e}_2]}\bar{e}_2-\left[\mathrm{L}_{\bar{e}_1},\mathrm{L}_{\bar{e}_2}\right]\bar{e}_2=0.
\]
Straithforward calculations using \eqref{lc} give
\begin{eqnarray*}
&&z_0.\bar{e}_1=-\frac{1}{2}\langle z_0,z_0\rangle e_2,\ \bar{e}_2.\bar{e}_1=-\frac{1}{2}z_0-\frac{1}{2}\sum_{i=1}^r(\beta_i+\gamma_i)b_i,\ \bar{e}_1.\bar{e}_1=-\sum_{i=1}^r\alpha_ib_i,\\
&&\bar{e}_1b_i=\alpha_ie_1+\frac{1}{2}(\beta_i+\gamma_i)e_2,\ \bar{e}_2b_i=\frac{1}{2}(\beta_i+\gamma_i)e_1+\delta_ie_2.
\end{eqnarray*}  
Thus the condition $\mathrm{L}_{[\bar{e}_1,\bar{e}_2]}\bar{e}_1-\left[\mathrm{L}_{\bar{e}_1},\mathrm{L}_{\bar{e}_2}\right]\bar{e}_1=0$ is equivalent to \eqref{cond1}. Similarly, we show that the second condition $\mathrm{L}_{[\bar{e}_1,\bar{e}_2]}\bar{e}_2-\left[\mathrm{L}_{\bar{e}_1},\mathrm{L}_{\bar{e}_2}\right]\bar{e}_2=0$ is also equivalent to \eqref{cond1}. This completes the proof.
\end{proof}
\begin{co}
	If a 2-step nilpotent Lie algebra $\G$ admits a flat pseudo-Euclidean metric of signature $(2,n-2)$, then $\dim[\G,\G]\leqslant3$.
\end{co}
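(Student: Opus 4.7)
The plan is to reduce the statement to a case analysis based on $\dim(Z(\G)\cap Z(\G)^\perp)$, which by Proposition \ref{dege} must be at least $1$ (since the center is degenerate) and by the signature assumption at most $2$ (since $Z(\G)\cap Z(\G)^\perp$ is totally isotropic inside a space of signature $(2,n-2)$). In each case, I would simply read off the derived ideal from the explicit description of $\G$ provided by Theorems \ref{dim1} and \ref{dim2}.

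In the case $\dim(Z(\G)\cap Z(\G)^\perp)=1$, Theorem \ref{dim1} says $\G$ is either a trivial extension of $\mathcal{H}_3$ or a trivial extension of $\mathrm{L}_6^4$. For a trivial extension $\G=Z_1\oplus\h$ with $Z_1$ central, one has $[\G,\G]=[\h,\h]$. For $\h=\mathcal{H}_3$ this is one-dimensional, and for $\h=\mathrm{L}_6^4$ the defining brackets $[x_1,x_2]=x_5$, $[x_1,x_3]=[x_2,x_4]=x_6$ show $[\h,\h]=\mathrm{span}\{x_5,x_6\}$, of dimension $2$. So $\dim[\G,\G]\leq 2$ in this case.

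In the case $\dim(Z(\G)\cap Z(\G)^\perp)=2$, I would invoke Theorem \ref{dim2} and read the bracket table \eqref{cond1} directly: every nonzero bracket lies in $\mathrm{span}\{e_1,e_2,z_0\}$, so $[\G,\G]\subset\mathrm{span}\{e_1,e_2,z_0\}$ and hence $\dim[\G,\G]\leq 3$. Combining the two cases gives the bound.

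There is essentially no obstacle here; the work has already been done in the preceding classification theorems, and the corollary is just a matter of inspecting the resulting brackets. The only thing to double-check is that the reduction to these two cases is exhaustive, which follows from Proposition \ref{dege}(1) together with the fact that a totally isotropic subspace of a space of signature $(2,n-2)$ has dimension at most $2$.
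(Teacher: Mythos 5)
Your proof is correct and is exactly the argument the paper intends: the corollary is stated without proof as an immediate consequence of the case split $\dim(Z(\G)\cap Z(\G)^\perp)\in\{1,2\}$ (degeneracy of the center plus the bound on totally isotropic subspaces in signature $(2,n-2)$) and a direct reading of the brackets in Theorems \ref{dim1} and \ref{dim2}. Nothing is missing.
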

\section*{Examples}
In this section, we show that any 6-dimensional 2-step nilpotent Lie algebra, which is not an extension trivial of $\mathcal{H}_5$, admits a flat pseudo-Euclidean metric of signature $(2,n-2)$, where $\mathcal{H}_5$ is a 5-dimensional Heisenberg Lie algebra. For this, we use the table below which give all 6-dimensional 2-step nilpotent Lie algebras (see \cite[pp.3]{Abiev}). Note that $\mathcal{H}_5$ (resp. $\mathcal{H}_3$) is denoted in this table by $\mathrm{L}_5^4$ (resp. $\mathrm{L}_3$). 
\begin{center}
	\begin{tabular}{ll}
		\hline
		{\bf Lie algebra}&{\bf Nonzero commutators}\\
		\hline
		$L_3\oplus3L_1$&$[x_1,x_2]=x_3$\\
		$L_5^1\oplus L_1$&$[x_1,x_2]=x_3,\ [x_1,x_4]=x_5$\\
		$L_5^4\oplus L_1$&$[x_1,x_3]=x_5,\ [x_2,x_4]=x_5$\\
		$L_3\oplus L_3$&$[x_1,x_2]=x_3,\ [x_4,x_5]=x_6$\\
		$L_6^4$&$[x_1,x_2]=x_5,\ [x_1,x_3]=x_6,\ [x_2,x_4]=x_6$\\
		$L_6^5(-1)$&$[x_1,x_3]=x_5,\ [x_1,x_4]=x_6,\ [x_2,x_4]=x_5,\ [x_2,x_3]=-x_6$\\
		$L_6^3$&$[x_1,x_3]=x_6,\ [x_1,x_2]=x_4,\ [x_2,x_3]=x_5$\\
		\hline
	\end{tabular}
\end{center}
The result is evident for $\mathrm{L}_3\oplus3\mathrm{L}_1$ and $\mathrm{L}_6^4$ (theorem \ref{dim1}). Let $\prs$ be a pseudo-Euclidean metric of signature $(2,n-2)$ given by the matrix  
\[
\prs=\left(\begin{array}{cccccc}0&1&0&0&0&0\\1&0&0&0&0&0\\0&0&0&1&0&0\\0&0&1&0&0&0\\0&0&0&0&1&0\\0&0&0&0&0&1\end{array}\right)
\]
Using theorem \ref{dim2}, let us show that, for all those Lie algebras $\mathrm{L}_5^1\oplus\mathrm{L}_1$, $\mathrm{L}_3\oplus\mathrm{L}_3$, $\mathrm{L}_6^5(-1)$ and $\mathrm{L}_6^3$ there exists a basis $\mathbb{B}$ such that the metric given in $\mathbb{B}$ by $\prs$ is flat.
\begin{itemize}
\item For $\mathrm{L}_5^1\oplus\mathrm{L}_1$, with our notations we put $\mathbb{B}=\{e_1,\bar{e}_1,e_2,\bar{e}_2,z_0,b_1\}$ where $e_1=x_6$, $\bar{e}_1=x_1$, $e_2=x_5$, $\bar{e}_2=x_2$, $z_0=x_3$ and $b_1=x_4$. One can verify easly that in this basis, the Lie brackets and the metric verify the conditions \eqref{cond1} and \eqref{cond}, thus $\left(\mathrm{L}_5^1\oplus\mathrm{L}_1,\prs\right)$ is flat. 
\item For $\mathrm{L}_3\oplus\mathrm{L}_3$, we put $\mathbb{B}=\{e_1,\bar{e}_1,e_2,\bar{e}_2,b_1,b_2\}$ where $e_1=x_3$, $\bar{e}_1=x_1$, $e_2=x_6$, $\bar{e}_2=x_4$, $b_1=x_1$ and $b_2=x_5$.
\item For $\mathrm{L}_6^5(-1)$, we put $\mathbb{B}=\{e_1,\bar{e}_1,e_2,\bar{e}_2,b_1,b_2\}$ where $e_1=x_5+x_6$, $\bar{e}_1=x_1$, $e_2=-2x_6$, $\bar{e}_2=x_2$, $b_1=x_4$ and $b_2=-(3+\sqrt{15})x_4-x_3$.
\item For $\mathrm{L}_6^3$, we put $\mathbb{B}=\{e_1,\bar{e}_1,e_2,\bar{e}_2,z_0,b_1\}$ where $e_1=x_4$, $\bar{e}_1=x_1$, $e_2=\frac{4}{3}x_5$, $\bar{e}_2=x_3$, $z_0=x_6$ and $b_1=x_2$.
\end{itemize}
For $\G=\mathrm{L}_5^4\oplus\mathrm{L}_1$, it is clear that this algebra can not admit flat pseudo-Euclidean metric of signature $(2,n-2)$ such that $\dim Z(\G)\cap Z(\G)^\bot=1$ (theorem \ref{dim1}). Suppose that it admits such metric with $\dim Z(\G)\cap Z(\G)^\bot=2$ (theorem \ref{dim2}). We have $\dim[\G,\G]=1$ and $\dim Z(\G)=2$. Then $\dim Z(\G)^\bot=4$ and $\dim B_1=2$. Put $[\G,\G]=\R e_1$, thus the Lie brackets satisfy 
\[
[\bar{e}_1,\bar{e}_2]=\alpha e_1,\ [\bar{e}_1,b_i]=\alpha_ie_1,\ [\bar{e}_2,b_i]=\gamma_ie_1,\ i=1,2.
\]
The condition \eqref{cond} implies that $\gamma_1=\gamma_2=0$. Then $\alpha,\alpha_1,\alpha_2\in\R^*$. The fact that $\alpha=0$, for example, implies that $\bar{e}_2\in Z(\G)$. Put $b_2'=b_2-\frac{\alpha_2}{\alpha_1}b_1$, then $b_2'\in Z(\G)$, which is a contradiction. It follows that $\mathrm{L}_5^4\oplus\mathrm{L}_1$ can not admit flat pseudo-Euclidean metrics of signature $(2,n-2)$.

\bibliographystyle{elsarticle-num}

\end{document}